\newtheorem{thm}{Theorem}[section]
\newtheorem{cor}[thm]{Corollary}
\newtheorem{lem}[thm]{Lemma}
\newtheorem{prop}[thm]{Proposition}
\theoremstyle{definition}
\theoremstyle{remark}
\def\divv{\textrm{div}}
\def\RR{\mathbb{R}}
\def\NN{\mathbb{N}}
\def\Ncal{{\mathcal N}}
\title{Ground state and nodal solutions  for \\ a class of double phase problems}
\author{N.S. Papageorgiou\footnote{National Technical University, Zografou  Campus, 15780 Athens, Greece \& Institute of Mathematics, Physics and Mechanics, 1000 Ljubljana, Slovenia.
E-mail: {\tt npapg@math.ntua.gr}}, 
V.D. R\u adulescu\footnote{AGH University of Science and Technology, 30-059 Krakow, Poland \& Institute of Mathematics, Physics and Mechanics, 1000 Ljubljana, Slovenia
\& University of Craiova, 200585 Craiova, Romania.
E-mail: {\tt vicentiu.radulescu@imfm.si}},  
D.D. Repov\v s\footnote{Faculty of Education and Faculty of Mathematics and Physics, University of Ljubljana \& Institute of Mathematics, Physics and Mechanics, 1000 Ljubljana, Slovenia.
E-mail: {\tt dusan.repovs@guest.arnes.si}}}
\begin{document}
\maketitle

\begin{abstract}
We consider a double phase problem driven by the sum of the $p$-Laplace operator and a weighted $q$-Laplacian ($q<p$), with a weight function which is not bounded away from zero. The reaction term is $(p-1)$-superlinear. Employing the Nehari method, we show that the equation has a ground state solution of constant sign and a nodal (sign-changing) solution.

\smallskip\noindent
{\it 2010 AMS Subject Classification:} 35J60, 35D05.

\smallskip\noindent
{\it Keywords:} Double phase operator, weight function, superlinear reaction, Nehari manifold, ground state solution, nodal solution.
\end{abstract}

\section{Introduction}
This paper was motivated by several recent contributions to the qualitative analysis of nonlinear problems with unbalanced growth. We mainly refer to the pioneering contributions of Marcellini \cite{marce1,marce2,marce3} who studied lower semicontinuity and regularity properties of minimizers of certain quasiconvex integrals. Problems of this type arise in nonlinear elasticity and are connected with the deformation of an elastic body, cf. Ball \cite{ball1,ball2}.
\subsection{Unbalanced problems and their historical traces}

Let $\Omega$ be a bounded domain in $\RR^N$ ($N\geq 2$) with a smooth boundary. If $u:\Omega\to\RR^N$ is the displacement and $Du$ is the $N\times N$  matrix of the deformation gradient, then the total energy can be represented by an integral of the type
\begin{equation}\label{paolo}I(u)=\int_\Omega f(x,Du(x))dx,\end{equation}
where the energy function $f=f(x,\xi):\Omega\times\RR^{N\times N}\to\RR$ is quasiconvex with respect to $\xi$, see Morrey \cite{morrey}. One of the simplest examples considered by Ball is given by functions $f$ of the type
$$f(\xi)=g(\xi)+h({\rm det}\,\xi),$$
where ${\rm det}\,\xi$ is the determinant of the $N\times N$ matrix $\xi$, and $g$, $h$ are nonnegative convex functions, which satisfy the growth conditions
$$g(\xi)\geq c_1\,|\xi|^p;\quad\lim_{t\to+\infty}h(t)=+\infty,$$
where $c_1$ is a positive constant and $1<p<N$. The condition $p\leq N$ is necessary to study the existence of equilibrium solutions with cavities, that is, minima of the integral \eqref{paolo} that are discontinuous at one point where a cavity forms; in fact, every $u$ with finite energy belongs to the Sobolev space $W^{1,p}(\Omega,\RR^N)$, and thus it is a continuous function if $p>N$. In accordance with these problems arising in nonlinear elasticity, Marcellini \cite{marce1,marce2} considered continuous functions $f=f(x,u)$ with {\it unbalanced growth} that satisfy
$$c_1\,|u|^p\leq |f(x,u)|\leq c_2\,(1+|u|^q)\quad\mbox{for all}\ (x,u)\in\Omega\times\RR,$$
where $c_1$, $c_2$ are positive constants and $1\leq p\leq q$. Regularity and existence of solutions of elliptic equations with $p,q$--growth conditions were studied in \cite{marce2}.

The study of non-autonomous functionals characterized by the fact that the energy density changes its ellipticity and growth properties according to the point has been continued in a series of remarkable papers by Mingione {\it et al.} \cite{1Bar-Col-Min, 2Bar-Col-Min, 8Col-Min, 9Col-Min}. These contributions are in relationship with the work of Zhikov \cite{23Zhikov, 24Zhikov}, which describe the
behavior of phenomena arising in nonlinear
elasticity.
In fact, Zhikov intended to provide models for strongly anisotropic materials in the context of homogenisation.
In particular, he considered the following model
functional
\begin{equation}\label{mingfunc}
{\mathcal P}_{p,q}(u) :=\int_\Omega (|Du|^p+a(x)|Du|^q)dx,\quad 0\leq a(x)\leq L,\ 1<p<q,
\end{equation}
where the modulating coefficient $a(x)$ dictates the geometry of the composite made of
two differential materials, with hardening exponents $p$ and $q$, respectively.

Another significant model example of a functional with $(p,q)$--growth studied by Mingione {\it et al.} is given by
$$u\mapsto \int_\Omega |Du|^p\log (1+|Du|)dx,\quad p\geq 1,$$
which is a logarithmic perturbation of the $p$-Dirichlet energy.

\subsection{Statement of the problem}
Let $\Omega\subseteq\RR^N$ be a bounded domain with a Lipschitz boundary $\partial\Omega$. In this paper we study the following double phase Dirichlet problem
\begin{equation}\label{eq1}
  -\Delta_p u- \divv\left(a(z)|Du|^{q-2}Du\right)=f(z,u) \mbox{ in } \Omega,\;u|_{\partial\Omega}=0,\;1<q<p.
\end{equation}

In this problem, $\Delta_p$ denotes the $p$-Laplace differential operator defined by
$$
\Delta_p u=\divv\left(|Du|^{p-2}Du\right) \mbox{ for all } u\in W^{1,p}_0(\Omega).
$$

So, in problem \eqref{eq1} the differential operator is the sum of a $p$-Laplacian and of a weighted $q$-Laplace operator with $q<p$ and weight $a\in L^{\infty}(\Omega)$, $a(z)>0$ for a.a. $z\in \Omega$. The integrand in the energy functional corresponding to this differential operator is $k(z,t)=\frac{1}{p}t^p+\frac{1}{q} a(z)t^q$ for all $t\geq0$. This integrand exhibits balanced growth since we have for some $c_0>0$
$$
\frac{1}{p}t^p\leq k(z,t)\leq c_0\left(1+t^p\right) \mbox{ for a.a. } z\in \Omega, \mbox{ and all } t>0.
$$

However, the presence of the weight $a(\cdot)$ which is not continuous and is not bounded away from zero, does not permit the use of the nonlinear regularity theory of Lieberman \cite{14Lieberman} and  the nonlinear strong maximum principle of Pucci and Serrin \cite[pp. 111,120]{21Puc-Ser}. In fact, these were the main tools used in the analysis of $(p,q)$-equations (that is, equations driven by the sum of a $p$-Laplacian and of a $q$-Laplacian with no weight) and they led to multiplicity results for such equations. We refer to the works of Mugnai and Papageorgiou \cite{16Mug-Pap}, Papageorgiou and R\u adulescu \cite{17Pap-Rad}, and Papageorgiou, Vetro and Vetro \cite{19Pap-Vet-Vet}, which deal with $(p,q)$-equations with a $(p-1)$-superlinear reaction term. In problem \eqref{eq1} the reaction (source) term $f(z,x)$ is a measurable function which is $C^1$ in the $x$-variable and $(p-1)$-superlinear as $x\rightarrow\pm\infty$. The approach developed in the present paper is based on the Nehari method.

We mention that double phase equations arise in mathematical models of various physical phenomena. We mention the works of Benci, D'Avenia, Fortunato and Pisani \cite{3Ben-D'Av-For-Pis} (in quantum physics), Cherfils and Ilyasov \cite{4Che-Ily} (in reaction-diffusion systems), Zhikov \cite{23Zhikov,24Zhikov} (in nonlinear elasticity theory). Recently, in a series of remarkable papers Mingione {\it et al.} (see \cite{1Bar-Col-Min, 2Bar-Col-Min, 8Col-Min, 9Col-Min}) produced local regularity results for equations driven by the unbalanced double phase operator, namely when
$$
-\divv\left(a(z)|Du|^{p-2}Du\right)-\Delta_q u \mbox{ with } 1<q<p.
$$
In this case the integrand in the corresponding energy functional is
$$
\hat{k}(z,x)=\frac{1}{p}a(z)t^p+\frac{1}{q}t^q \mbox{ for all } t>0
$$
and we have
$$
\frac{1}{q} t^q\leq \hat{k}(z,x)\leq \hat{c}_0\left(1+t^p\right) \mbox{ for a.a. }z\in \Omega,\mbox{ all } t>0,\mbox{ some } \hat{c}_0>0\mbox{ (unbalanced growth)}.
$$

For such equations the appropriate functional space framework is provided by Musielak-Orlicz-Sobolev spaces. This leads to more restrictive conditions on the weight $a(\cdot)$, which cannot be discontinuous and also there are restrictions on the exponents $1<q<p$, namely $\frac{p}{q}<1+\frac{1}{N}$. So, $p$ and $q$ cannot differ too much. We also refer to the works of Colasuonno and Squassina \cite{7Cal-Squ}, Ge, Lv and Lu \cite{11Ge-Lv-Lu}, Liu and Dai \cite{15Liu-Dai}, Papageorgiou, R\u adulescu and Repov\v{s} \cite{prrzamp,prrpams}, Qihu and R\u adulescu \cite{qihu}, and the survey paper of R\u adulescu \cite{22Radulescu}.

\section{Mathematical preliminaries and hypotheses}

The balanced growth of the integrand $k(z,\cdot)$ permits the use of the Sobolev space $W^{1,p}_0(\Omega)$ for the study of problem \eqref{eq1}. By $\|\cdot\|$ we denote the norm of $W^{1,p}_0(\Omega)$. Using the Poincar\'e inequality, we have
$$
\|u\|=\|Du\|_p \mbox{ for all } u\in W^{1,p}_0(\Omega).
$$

Consider the following nonlinear eigenvalue problem
$$
-\Delta_p u=\hat{\lambda}|u|^{p-2}u \mbox{ in } \Omega,\;u|_{\partial\Omega}=0.
$$

It is well-known that this eigenvalue problem has a smallest eigenvalue $\hat{\lambda}_1$, which is isolated, simple and the corresponding eigenfunctions have fixed sign. By $\hat{u}_1$ we denote the positive, $L^p$-normalized (that is, $\|\hat{u}_1\|_p=1$) eigenfunction. Then $\hat{u}_1\in C_0^1(\overline{\Omega})$, $\hat{u}_1(z)>0$ for all $z\in \Omega$ and $\frac{\partial\hat{u_1}}{\partial n}|_{\partial\Omega}<0$, with $n(\cdot)$ being the outward unit normal on $\partial\Omega$. We have
\begin{equation}\label{eq2}
  \hat{\lambda}_1=\inf\left\{\frac{\|Du\|_p^p}{\|u\|_p^p}:\:u\in W^{1,p}_0(\Omega),\:u\not=0\right\},\quad\|D\hat{u}_1\|_p^p=\hat{\lambda}_1\|\hat{u}_1\|_p^p.
\end{equation}
For details we refer to Gasinski and Papageorgiou \cite{10Gas-Pap}.

We introduce the conditions on the weight $a(\cdot)$.

\smallskip\noindent
$H(a)$: $a\in L^\infty(\Omega)$, $a(z)>0$ for a.a. $z\in \Omega$.

\smallskip
We define the following quantity related to the double phase differential operator:
\begin{equation}\label{eq3}
  \vartheta=\inf\left\{\frac{\displaystyle{\|Du\|_p^p+\frac{p}{q}\int_\Omega a(z)|Du|^q dz}}{\|u\|_p^p}:\:u\in W^{1,p}_0(\Omega),\;u\not=0\right\}.
\end{equation}

\begin{lem}\label{lem1}
  If hypothesis $H(a)$ holds, then $\vartheta=\hat{\lambda}_1$.
\end{lem}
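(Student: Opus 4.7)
The plan is to prove the two inequalities $\vartheta\geq\hat{\lambda}_1$ and $\vartheta\leq\hat{\lambda}_1$ separately. Both are expected to follow from elementary manipulations of the Rayleigh quotient in \eqref{eq3}, exploiting the sign of $a(\cdot)$ and the fact that $q<p$.

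For the inequality $\vartheta\geq\hat{\lambda}_1$, I would note that since $a(z)>0$ a.e.\ on $\Omega$ by $H(a)$, the extra term $\frac{p}{q}\int_\Omega a(z)|Du|^q\,dz$ in the numerator of the quotient defining $\vartheta$ is nonnegative for every $u\in W^{1,p}_0(\Omega)$. Therefore
$$\frac{\|Du\|_p^p+\frac{p}{q}\int_\Omega a(z)|Du|^q\,dz}{\|u\|_p^p}\geq\frac{\|Du\|_p^p}{\|u\|_p^p}\geq\hat{\lambda}_1,$$
by the variational characterization \eqref{eq2}. Passing to the infimum over $u\neq 0$ gives the desired lower bound.

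For the reverse inequality $\vartheta\leq\hat{\lambda}_1$, the key observation is that $q<p$ allows us to make the weighted $q$-term negligible by scaling. Fix any $u\in W^{1,p}_0(\Omega)\setminus\{0\}$ and test \eqref{eq3} with $tu$ for $t>0$. By homogeneity,
$$\frac{\|D(tu)\|_p^p+\frac{p}{q}\int_\Omega a(z)|D(tu)|^q\,dz}{\|tu\|_p^p}=\frac{\|Du\|_p^p}{\|u\|_p^p}+t^{q-p}\,\frac{p}{q}\,\frac{\int_\Omega a(z)|Du|^q\,dz}{\|u\|_p^p}.$$
Since $a\in L^\infty(\Omega)$, the integral $\int_\Omega a(z)|Du|^q\,dz$ is finite, and because $q-p<0$, letting $t\to+\infty$ shows that
$$\vartheta\leq\frac{\|Du\|_p^p}{\|u\|_p^p}.$$
Taking the infimum over $u\neq 0$ and invoking \eqref{eq2} once more yields $\vartheta\leq\hat{\lambda}_1$.

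There is no genuine obstacle here; the argument is essentially a one-line scaling observation combined with the Rayleigh quotient for $\hat{\lambda}_1$. The only subtlety worth emphasizing is that one does \emph{not} need any lower bound on $a(\cdot)$ (it may vanish on sets of positive measure), nor any regularity beyond $a\in L^\infty(\Omega)$, because the weighted term is swept away by the scaling thanks solely to the inequality $q<p$.
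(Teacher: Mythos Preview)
Your proof is correct and follows essentially the same approach as the paper: both establish $\vartheta\geq\hat{\lambda}_1$ from the nonnegativity of the weighted term and then obtain $\vartheta\leq\hat{\lambda}_1$ by the scaling $u\mapsto tu$ and letting $t\to+\infty$, using $q<p$. The only cosmetic difference is that the paper tests directly with $t\hat{u}_1$ (so the limit immediately yields $\hat{\lambda}_1$), whereas you test with a generic $u$ and then take the infimum afterwards.
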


\begin{proof}
  By \eqref{eq2}, \eqref{eq3} and hypothesis $H(a)$, it is clear that we have
  \begin{equation}\label{eq4}
    \hat{\lambda}_1\leq \vartheta.
  \end{equation}

  On the other hand, for every $t>0$ we have
  \begin{eqnarray*}
    \vartheta &\leq& \frac{\displaystyle{\|D(t\hat{u}_1)\|_p^p+\frac{p}{q}\int_\Omega a(z)|D(t\hat{u}_1)|^q dz}}{t^p}\ \mbox{ (recall that $\|\hat{u}_1\|_p=1$) } \\
     &=& \|D\hat{u}_1\|_p^p+\frac{1}{t^{p-q}}\frac{p}{q}\int_\Omega a(z)|D\hat{u}_1|^q dz \\
     &=& \hat{\lambda}_1+\frac{1}{t^{p-q}}\frac{p}{q}\int_\Omega a(z)|D\hat{u}_1|^q dz \mbox{ (see \eqref{eq2}).}
  \end{eqnarray*}

  We let $t\rightarrow+\infty$. Since $q<p$, we obtain
  \begin{eqnarray*}
  \vartheta&\leq& \hat{\lambda}_1,\\
  \Rightarrow \vartheta&=& \hat{\lambda}_1 \mbox{ (see \eqref{eq4}). }
  \end{eqnarray*}
  The proof is now complete.
\end{proof}

Next, we introduce the conditions on the reaction function $f(z,x)$. Recall that $p^*$ denotes the critical Sobolev exponent corresponding to $p$, which is defined by
$$
p^*=\left\{
      \begin{array}{ll}
        \frac{Np}{N-p}, & \mbox{ if } p<N \\
        +\infty, & \mbox{ if } N\leq p.
      \end{array}
    \right.
$$
$H(f)$: $f:\Omega\times \RR\rightarrow\RR$ is a measurable function such that for a.a. $z\in \Omega$, $f(z,0)=0$, $f(z,\cdot)\in C^1(\RR\setminus\{0\})$ and
\begin{itemize}
  \item[(i)] $|f'_x(z,x)|\leq a_0(z)\left(1+|x|^{r-2}\right)$ for a.a. $z\in \Omega$ and all $x\in \RR$, with $a_0\in L^\infty(\Omega)$, $p<r<p^*$;
  \item[(ii)] if $F(z,x)=\int_0^x f(z,s)ds$, then $\displaystyle{\lim_{x\rightarrow\pm\infty}\frac{F(z,x)}{|x|^p}}=+\infty$ uniformly for a.a. $z\in \Omega$ and there exist $\tau\in \left(\max\{1,(r-p)\frac{N}{p}\},p^*\right)$ and $\beta_0>0$ such that
    $$
    \beta_0\leq \liminf_{x\rightarrow\pm\infty}\frac{f(z,x)x-pF(z,x)}{|x|^\tau} \mbox{ uniformly fo a.a. }z\in \Omega;
    $$
  \item[(iii)] $\displaystyle{\lim_{x\rightarrow0}\frac{f(z,x)}{|x|^{q-2}x}=0}$ uniformly for a.a. $z\in \Omega$;
  \item[(iv)] $0<(p-1)f(z,x)x\leq f'_x(z,x)x^2$ for a.a. $z\in \Omega$ and all $x\not=0$.
\end{itemize}

 {\bf Remarks}. {\em Hypothesis $H(f)(ii)$ implies that for a.a. $z\in \Omega$, $f(z,\cdot)$ is $(p-1)$-superlinear.
Hypothesis $H(f)(iv)$ implies that for a.a. $z\in \Omega$ we have
$$
x\mapsto \frac{f(z,x)}{|x|^{p-1}} \mbox{ is increasing on } (0,\infty) \mbox{ and on } (-\infty,0),
$$
$$
x\mapsto f(z,x)x-pF(z,x) \mbox{ is increasing in } |x|.
$$

Note that the above monotonicities  are not strict, contrary to what was used in \cite{11Ge-Lv-Lu,15Liu-Dai}.}

\smallskip
 {\bf Examples}. Consider the following functions (for the sake of simplicity, we drop the $z$-dependence):
$$
f_1(x)=|x|^{r-2}x \mbox{ with } p<r<p^*
$$
$$
f_2(x)=\left\{
         \begin{array}{ll}
           |x|^{s-2}x-|x|^{p-2}x, & \mbox{ if } |x|\leq 1 \\
           k|x|^{p-2}x\ln |x|, & \mbox{ if } 1<|x|
         \end{array}
       \right. \mbox{ with } p<s,\;k=s-p>0.
$$

Note that $f_2(\cdot)$ although $(p-1)$-superlinear, does not satisfy the well known Ambrosetti-Rabinowitz condition, which is common in problems with a superlinear reaction (see \cite{15Liu-Dai}).

\smallskip
Let $\varphi:W^{1,p}_0(\Omega)\rightarrow\RR$ be the energy (Euler) functional for problem \eqref{eq1} defined by
$$
\varphi(u)=\frac{1}{p}\|Du\|_p^p+\frac{1}{q}\int_\Omega a(z)|Du|^q dz-\int_\Omega F(z,u) dz \mbox{ for all } u\in W^{1,p}_0(\Omega).
$$

Evidently, $\varphi\in C^1(W^{1,p}_0(\Omega))$. We say that $u\in W^{1,p}_0(\Omega)$ is a ``weak solution" of problem \eqref{eq1}, if
$$
\langle A_p(u),h \rangle+\int_\Omega a(z)|Du|^{q-2}(Du,Dh)_{\RR^N} dz=\int_\Omega f(z,u)h dz \mbox{ for all } h\in W^{1,p}_0(\Omega),
$$
with $\displaystyle{ A_p:W^{1,p}_0(\Omega)\rightarrow W^{-1,p'}(\Omega)=W^{1,p}_0(\Omega)^*}$ $\left(\frac{1}{p}+\frac{1}{p'}=1\right)$ being the nonlinear map defined by
$$
\langle A_p(u),h\rangle=\int_\Omega |Du|^{p-2} (Du,Dh)_{\RR^N} dz \mbox{ for all } u,h\in W^{1,p}_0(\Omega).
$$

As we have already mentioned, our approach is based on the Nehari method. For this reason, we introduce the Nehari manifold for $\varphi$, defined by
$$
{\mathcal N}=\{u\in W^{1,p}_0(\Omega): \langle \varphi'(u),u\rangle=0,\;u\not=0\}.
$$

As above, we denote by $\langle\cdot,\cdot\rangle$  the duality brackets for the pair $\left(W^{-1,p'}(\Omega), W^{1,p}_0(\Omega)\right)$. Evidently, the nontrivial weak solutions of \eqref{eq1} belong to ${\mathcal N}$. Also, since we want to produce nodal solutions, we will use the following set
$$
{\mathcal N}_0=\{u\in W^{1,p}_0(\Omega):\: u^+\in {\mathcal N},\;-u^-\in {\mathcal N}\}.
$$

Recall that $u^+=\max\{u,0\}$, $u^-=\max\{-u,0\}$ for all $u\in W^{1,p}_0(\Omega)$. We have $u^+,u^- \in W^{1,p}_0(\Omega)$ and $u=u^+-u^-$, $|u|=u^+ + u^-$. In the sequel, we will say ``solution" and mean ``weak solution". Also, by $|\cdot|_N$ we denote the Lebesgue measure on $\RR^N$.

\section{Ground state solutions}

In this section we prove the existence of a solution of \eqref{eq1} which minimizes $\varphi|_{N}$. Such a solution is known as a ``ground state solution".

\begin{prop}\label{prop2}
  If hypotheses $H(a)$, $H(f)$ hold and $u\in W^{1,p}_0(\Omega)$, $u\not=0$, then there exists a unique $t_u>0$ such that $t_u u\in {\mathcal N}$.
\end{prop}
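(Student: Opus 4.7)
The plan is to introduce the fibering function $\eta_u\colon (0,\infty)\to\RR$ defined by
\[
\eta_u(t)=\frac{\langle \varphi'(tu),tu\rangle}{t^p}=\|Du\|_p^p+t^{q-p}\int_\Omega a(z)|Du|^q\,dz-\int_\Omega \frac{f(z,tu)(tu)}{t^p}\,dz,
\]
so that $t_u u\in\mathcal{N}$ if and only if $\eta_u(t_u)=0$. I will show that $\eta_u$ is continuous and strictly decreasing on $(0,\infty)$, with $\eta_u(t)\to+\infty$ as $t\to 0^+$ and $\eta_u(t)\to-\infty$ as $t\to+\infty$; then both existence and uniqueness of $t_u$ follow from the intermediate value theorem.

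To handle the behaviour near $0$, I will combine $H(f)(i)$ and $H(f)(iii)$ into the standard estimate $|f(z,x)|\leq \varepsilon|x|^{q-1}+c_\varepsilon|x|^{r-1}$, which bounds the last integral above by $\varepsilon t^{q-p}\|u\|_q^q+c_\varepsilon t^{r-p}\|u\|_r^r$. Since $u\neq 0$ and $a(\cdot)>0$ a.e., the quantity $\int_\Omega a(z)|Du|^q\,dz$ is strictly positive, so choosing $\varepsilon$ small enough keeps the coefficient of $t^{q-p}$ in $\eta_u$ bounded below by a positive constant; as $q-p<0<r-p$, this forces $\eta_u(t)\to+\infty$ as $t\to 0^+$. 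Continuity of $\eta_u$ on $(0,\infty)$ follows from the same bound by dominated convergence.

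For the behaviour at $+\infty$, $H(f)(iv)$ gives $(xf(z,x))'\geq pf(z,x)$, so integration from $0$ yields $f(z,x)x\geq pF(z,x)$; combined with the superlinearity in $H(f)(ii)$, this produces $f(z,x)x/|x|^p\to+\infty$ uniformly in $z$. Using also that $f(z,x)x\geq 0$ from $H(f)(iv)$, for any $M>0$ and corresponding $K$ I will estimate
\[
\int_\Omega\frac{f(z,tu)(tu)}{t^p}\,dz\ \geq\ M\int_{\{|tu|\geq K\}}|u|^p\,dz\ \longrightarrow\ M\|u\|_p^p\quad\text{as }t\to+\infty,
\]
and since $M$ is arbitrary the subtracted integral in $\eta_u$ diverges, giving $\eta_u(t)\to-\infty$.

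The main obstacle is strict monotonicity. The middle term $t^{q-p}\int_\Omega a|Du|^q\,dz$ is strictly decreasing in $t$ (since $q<p$ and the integral is positive), so it suffices to prove that $t\mapsto \int_\Omega f(z,tu)(tu)/t^p\,dz$ is nondecreasing. Rewriting the integrand off $\{u=0\}$ as $[f(z,tu)/|tu|^{p-2}(tu)]\,|u|^p$, I will invoke the Remark following $H(f)$ — namely that $x\mapsto f(z,x)/|x|^{p-2}x$ is nondecreasing on $(0,\infty)$ and on $(-\infty,0)$, a direct consequence of $H(f)(iv)$ — to obtain pointwise monotonicity in $t$, which integration preserves. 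Thus $\eta_u$ is the sum of a strictly decreasing function and a nonincreasing function, hence strictly decreasing, which yields uniqueness of $t_u$.
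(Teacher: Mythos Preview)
Your approach is essentially the paper's: both reduce $tu\in\mathcal N$ to a scalar equation and invoke strict monotonicity of the fibering expression (the paper writes it as $\|Du\|_p^p=\int_\Omega f(z,tu)t^{1-p}u\,dz-t^{q-p}\int_\Omega a|Du|^q\,dz$, with the right-hand side strictly increasing, which is exactly $-\eta_u(t)+\|Du\|_p^p$). You are in fact more thorough than the paper, which asserts existence of $t_u$ directly from strict monotonicity without explicitly checking the limits at $0^+$ and $+\infty$; your treatment of these limits via $H(f)(i),(iii)$ and $H(f)(ii),(iv)$ fills that in.

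One small slip to fix: the map $x\mapsto f(z,x)/(|x|^{p-2}x)$ is \emph{not} nondecreasing on $(-\infty,0)$. Differentiating gives
\[
\frac{d}{dx}\left[\frac{f(z,x)}{|x|^{p-2}x}\right]=\frac{f'_x(z,x)x-(p-1)f(z,x)}{|x|^{p}},
\]
and for $x<0$ hypothesis $H(f)(iv)$ yields, after dividing $(p-1)f(z,x)x\leq f'_x(z,x)x^2$ by $x<0$, the inequality $f'_x(z,x)x\leq (p-1)f(z,x)$; so the map is nonincreasing on $(-\infty,0)$ (and nondecreasing on $(0,\infty)$, as the Remark's statement about $f(z,x)/|x|^{p-1}$ also shows). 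What you actually need---and what remains true---is that for each $z$ with $u(z)\neq 0$ the composition $t\mapsto f(z,tu(z))/(|tu(z)|^{p-2}tu(z))$ is nondecreasing in $t>0$: on $\{u>0\}$ both the inner map $t\mapsto tu(z)$ and the outer map are nondecreasing, while on $\{u<0\}$ both are nonincreasing, so the composition is nondecreasing in either case. With this correction your monotonicity argument, and hence the whole proof, is complete.
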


\begin{proof}
  We consider the fibering map $\mu_{u}:(0,\infty)\rightarrow\RR$ defined by
$$
\mu_u(t)=\varphi(tu) \mbox{ for all } t>0.
$$

Evidently, $\mu_u\in C^1(0,\infty)$ and using the chain rule, we have
$$
\mu'_u(t)=t^{p-1}\|Du\|_p^p+t^{q-1}\int_\Omega a(z) |Du|^q dz-\int_\Omega f(z,tu)u dz.
$$

We see that
$$
\mu'_u(t)=0\Leftrightarrow tu\in {\mathcal N}.
$$

So, we consider the equation $\mu'_u(t)=0$. This is equivalent to
\begin{equation}\label{eq5}
  \|Du\|_p^p=\int_\Omega \frac{f(z,tu)}{t^{p-1}}u dz-\frac{1}{t^{p-q}}\int_\Omega a(z)|Du|^q dz.
\end{equation}

In relation \eqref{eq5}, the right-hand side is strictly increasing (see hypothesis $H(f)(i)$ and recall that $q<p$). So, there exists a unique $t_u>0$ such that
\begin{eqnarray*}
   && \mu'_u(t_u)=0, \\
   &\Rightarrow& \langle \varphi'(t_u u),u\rangle=0 \mbox{ (by the chain rule), } \\
   &\Rightarrow& \langle\varphi'(t_u u), t_u u\rangle=0, \\
   &\Rightarrow& t_u u\in \Ncal.
\end{eqnarray*}
The proof is complete.
\end{proof}

\begin{cor}
  If hypotheses $H(a)$, $H(f)$ hold, then $\Ncal\not=\emptyset$.
\end{cor}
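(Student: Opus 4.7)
The plan is to observe that this corollary is an immediate consequence of Proposition 3.1. Since $\Omega$ is a bounded domain with Lipschitz boundary, the Sobolev space $W^{1,p}_0(\Omega)$ contains nontrivial elements; a concrete and canonical choice already introduced in the paper is the first eigenfunction $\hat{u}_1$ of the Dirichlet $p$-Laplacian, which satisfies $\hat{u}_1 \neq 0$ (in fact $\hat{u}_1 \in C_0^1(\overline{\Omega})$ with $\hat{u}_1 > 0$ in $\Omega$). Applying Proposition 3.1 with the choice $u = \hat{u}_1$ yields a unique $t_{\hat{u}_1} > 0$ such that $t_{\hat{u}_1}\hat{u}_1 \in \mathcal{N}$, which shows that $\mathcal{N} \neq \emptyset$.

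There is really no obstacle to overcome here, since all of the analytic work (the structure of the fibering map $\mu_u(t) = \varphi(tu)$, the existence and uniqueness of $t_u$, the use of $H(f)(i)$ and the condition $q < p$ to ensure that the right-hand side of \eqref{eq5} is strictly increasing) has already been carried out in the proof of Proposition 3.1. The role of this corollary is purely to record the consequence that the minimization of $\varphi$ over $\mathcal{N}$ in the next step of the program makes sense. One could equivalently use any nonzero test function $\varphi \in C_c^\infty(\Omega) \subset W^{1,p}_0(\Omega)$ in place of $\hat{u}_1$; the conclusion is the same.
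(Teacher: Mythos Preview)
Your proposal is correct and matches the paper's intent: the corollary is stated without proof immediately after Proposition~\ref{prop2}, precisely because it follows at once by applying that proposition to any nonzero element of $W^{1,p}_0(\Omega)$. Your explicit choice of $\hat{u}_1$ (or any nonzero test function) is a perfectly acceptable way to spell this out.
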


\begin{prop}\label{prop4}
  If hypotheses $H(a)$, $H(f)$ hold and $u\in \Ncal$, then $\varphi(tu)\leq\varphi(u)$ for all $t>0$.
\end{prop}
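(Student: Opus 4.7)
The plan is to exploit the fibering map $\mu_u(t)=\varphi(tu)$ already introduced in the proof of Proposition~\ref{prop2}, and to show that $t=1$ is its unique global maximizer on $(0,\infty)$ when $u\in\Ncal$.

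First, since $u\in\Ncal$ means $\langle\varphi'(u),u\rangle=0$, the chain rule gives $\mu_u'(1)=0$. So it suffices to prove that $\mu_u'(t)>0$ on $(0,1)$ and $\mu_u'(t)<0$ on $(1,\infty)$. Recalling
$$
\mu_u'(t)=t^{p-1}\|Du\|_p^p+t^{q-1}\int_\Omega a(z)|Du|^q\,dz-\int_\Omega f(z,tu)u\,dz,
$$
I would divide by $t^{p-1}>0$ and examine
$$
\frac{\mu_u'(t)}{t^{p-1}}=\|Du\|_p^p+\frac{1}{t^{p-q}}\int_\Omega a(z)|Du|^q\,dz-\int_\Omega\frac{f(z,tu(z))}{t^{p-1}}\,u(z)\,dz,
$$
and argue that this expression is \emph{strictly decreasing} in $t>0$.

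The three pieces are handled separately. The first term is constant in $t$. Since $q<p$ and $a(z)>0$ a.e.\ with $u\neq 0$ (so $\int_\Omega a(z)|Du|^q\,dz>0$ by Poincar\'e), the factor $1/t^{p-q}$ is strictly decreasing, giving strict decrease of the second term. For the third term, the key input is the remark following $H(f)$: the map $x\mapsto f(z,x)/|x|^{p-1}$ is non-decreasing on each of $(0,\infty)$ and $(-\infty,0)$. Writing the pointwise integrand as
$$
\frac{f(z,tu(z))}{t^{p-1}}u(z)=\frac{f(z,tu(z))}{|tu(z)|^{p-1}}\,|u(z)|^{p-1}u(z)
\quad\text{for }u(z)\neq 0,
$$
and checking the two signs of $u(z)$ separately (for $u(z)>0$, $tu(z)$ increases with $t$ through positive values; for $u(z)<0$, $tu(z)$ decreases through negative values while $|u|^{p-1}u<0$), one sees that in both cases the integrand is non-decreasing in $t$. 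Hence the $f$-term, viewed with its minus sign, is non-increasing in $t$. Adding the three contributions gives strict decrease of $\mu_u'(t)/t^{p-1}$.

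Combined with $\mu_u'(1)=0$, this strict monotonicity yields $\mu_u'(t)>0$ for $t\in(0,1)$ and $\mu_u'(t)<0$ for $t>1$, so $\mu_u$ attains its global maximum on $(0,\infty)$ at $t=1$. Thus $\varphi(tu)=\mu_u(t)\leq\mu_u(1)=\varphi(u)$ for every $t>0$. The most delicate step I expect is the sign bookkeeping in the monotonicity of the $f$-integrand on $\{u<0\}$, since $H(f)(iv)$ only gives non-strict monotonicity there; the saving grace is that the $q$-term, thanks to $q<p$, always supplies the strict decrease needed to separate the cases $t<1$ and $t>1$.
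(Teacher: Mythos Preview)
Your argument is correct and, in fact, more economical than the paper's. Both proofs start from the fibering map $\mu_u$ and use that $u\in\Ncal$ gives $\mu_u'(1)=0$, but from there the routes diverge. The paper establishes that $t=1$ is the global maximizer by combining the uniqueness of the critical point (already known from Proposition~\ref{prop2}) with two asymptotic estimates: using $H(f)(i),(ii)$ it shows $\mu_u(t)<0$ for $t$ large, and using $H(f)(i),(iii)$ it shows $\mu_u(t)>0$ for $t$ small; a continuous function with these features and a single critical point must peak there. You instead show directly that $t\mapsto \mu_u'(t)/t^{p-1}$ is strictly decreasing, which forces $\mu_u'$ to change sign exactly once at $t=1$, from positive to negative. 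This is precisely the monotonicity observation already exploited in the proof of Proposition~\ref{prop2} (relation~\eqref{eq5}), now read as a statement about the sign of $\mu_u'$ rather than merely about uniqueness. The payoff of your route is that it uses only $H(f)(iv)$ and the structural fact $q<p$ (plus $H(a)$ to guarantee $\int_\Omega a|Du|^q\,dz>0$), whereas the paper's asymptotic argument draws additionally on hypotheses $H(f)(ii)$ and $H(f)(iii)$. The paper's approach, on the other hand, has the side benefit of producing the estimates \eqref{eq8} and \eqref{eq10}, the latter of which is reused in Proposition~\ref{prop7}. Your sign bookkeeping on $\{u<0\}$ is handled correctly, and you are right that the strict decrease is supplied entirely by the weighted $q$-term.
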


\begin{proof}
  We consider the fibering map $\mu_u(\cdot)$ introduced in the proof of Proposition \ref{prop2}. Since $u\in \Ncal$, we have $\mu'_u(1)=0$ and this is the unique critical point of $\mu_u(\cdot)$.

On account of hypotheses $H(f)(i),(ii)$, given $\eta>0$, we can find $c_\eta>0$ such that
\begin{equation}\label{eq6}
  F(z,x)\geq\frac{\eta}{p}|x|^p-c_\eta \mbox{ for a.a. } z\in \Omega \mbox{ and all } x\in \RR.
\end{equation}

Then for any $t>0$ we have
\begin{eqnarray*}
  \varphi(tu) &\leq& \frac{t^p}{p}\|Du\|_p^p +\frac{t^q}{q} \int_\Omega a(z)|Du|^q dz-\frac{t^p}{p}\eta\|u\|_p^p +c_\eta|\Omega|_N \mbox{ (see \eqref{eq6}) }\\
   &=& \frac{t^p}{p}\left[\|Du\|_p^p-\eta\|u\|_p^p\right]+\frac{t^q}{q}\int_\Omega a(z) |Du|^q dz+ c_\eta|\Omega|_N.
\end{eqnarray*}

Choosing $\eta>0$ such that $\|Du\|_p^p<\eta\|u\|_p^p$, we have
\begin{equation}\label{eq7}
  \varphi(tu)\leq c_1 t^q- c_2 t^p + c_\eta|\Omega|_N \mbox{ for some } c_1>0,\;c_2>0.
\end{equation}

Since $q<p$, from \eqref{eq7} we see that
\begin{equation}\label{eq8}
  \mu_u(t)=\varphi(tu)<0 \mbox{ for } t>1 \mbox{ big enough. }
\end{equation}

On the other hand, hypotheses $H(f)(i),(iii)$ imply that given $\varepsilon>0$, we can find $c_\varepsilon>0$ such that
\begin{equation}\label{eq9}
  F(z,x)\leq \frac{\varepsilon}{q}|x|^q+c_\varepsilon|x|^r \mbox{ for a.a. }z\in \Omega \mbox{ and all } x\in \RR.
\end{equation}

Therefore for $t>0$ we have
\begin{eqnarray*}
  \varphi(tu) &\geq& \frac{t^p}{p} \|Du\|_p^p+\frac{t^q}{q} \int_\Omega a(z) |Du|^q dz- \frac{\varepsilon t^q}{q}\|u\|_q^q-c_\varepsilon t^r\|u\|_r^r \mbox{ (see \eqref{eq9}) } \\
  &\geq& \frac{t^p}{p}\|u\|^p+\frac{t^q}{q}\left[\int_\Omega a(z)|Du|^q dz-\varepsilon\|u\|_q^q\right]-c_3 t^r\|u\|^r \mbox{ for some $c_3>0$.}
\end{eqnarray*}

Since $\displaystyle{\int_\Omega a(z)|Du|^q dz>0}$ (see hypothesis $H(a)$ and recall that $u\not=0$), by choosing $\varepsilon>0$ small enough, we have $\displaystyle{\int_\Omega a(z)|Du|^q dz\geq\varepsilon \|u\|_q^q}$ and so
$$
\varphi(tu)\geq c_4 t^p-c_5 t^r \mbox{ for some } c_4,c_5>0.
$$

Since $p<r$, it follows that
\begin{equation}\label{eq10}
  \mu_u(t)=\varphi(tu)>0 \mbox{ for all } t\in(0,1) \mbox{ small enough. }
\end{equation}

From \eqref{eq8} and \eqref{eq10} we can infer that $t_u=1$ is a maximizer of $\mu_u(\cdot)$ and so we have $\varphi(tu)\leq\varphi(u)$ for all $t>0$.
\end{proof}

Let $m=\displaystyle{\inf_{\Ncal} \varphi}$.

\begin{prop}\label{prop5}
  If hypotheses $H(a)$, $H(f)$ hold, then $m>0$.
\end{prop}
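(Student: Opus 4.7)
The plan is to argue by contradiction. Suppose there exists $\{u_n\}\subset\Ncal$ with $\varphi(u_n)\to 0$. Using $\langle\varphi'(u_n),u_n\rangle=0$, one obtains
$$
\varphi(u_n)=\left(\tfrac{1}{q}-\tfrac{1}{p}\right)\!\int_\Omega a(z)|Du_n|^q\,dz+\tfrac{1}{p}\!\int_\Omega\bigl[f(z,u_n)u_n-pF(z,u_n)\bigr]dz.
$$
By $H(f)(iv)$ and the remark following $H(f)$, both integrands are nonnegative; since $q<p$, both coefficients are positive. Hence both integrals tend to $0$. The second, combined with the lower bound $f(z,x)x-pF(z,x)\geq\tfrac{\beta_0}{2}|x|^\tau$ from $H(f)(ii)$ for $|x|$ large, shows $\{u_n\}$ is bounded in $L^\tau(\Omega)$. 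Interpolating between $L^\tau$ and $L^{p^*}$ gives $\|u_n\|_r\leq C\|u_n\|^{1-\theta}$ with $r(1-\theta)<p$---precisely the content of $\tau>(r-p)N/p$---and combining with $H(f)(i)$ and the Nehari identity yields $\{u_n\}$ bounded in $W_0^{1,p}(\Omega)$.

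Next, I would extract a subsequence $u_n\rightharpoonup u$ weakly in $W_0^{1,p}$ and $u_n\to u$ strongly in $L^s$ for $s<p^*$. Weak lower semicontinuity of the convex functional $v\mapsto\int_\Omega a|Dv|^q\,dz$ on $W_0^{1,p}$ (via the continuous embedding into $W_0^{1,q}$) gives $\int a|Du|^q=0$, and since $a>0$ a.e., $u\equiv 0$. The growth $|f(z,x)x|\leq\varepsilon|x|^q+c_\varepsilon|x|^r$ (from $H(f)(i),(iii)$) together with strong $L^q,L^r$ convergence of $u_n$ to $0$ then gives $\int f(z,u_n)u_n\,dz\to 0$; the Nehari identity forces $\|u_n\|\to 0$.

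The contradiction comes from rescaling. Set $v_n:=u_n/\|u_n\|$, so $\|v_n\|=1$. Dividing the Nehari identity by $\|u_n\|^q$, splitting the integration domain according to $\{\|u_n\|\,|v_n|\leq\delta\}$ and its complement, and using $H(f)(iii)$ on the first set and $H(f)(i)$ on the second yields $\int a|Dv_n|^q\to 0$. Passing to a further subsequence $v_n\rightharpoonup\bar v$, weak lsc gives $\bar v=0$, and compact Sobolev gives $v_n\to 0$ strongly in $L^p$. Now dividing Nehari by $\|u_n\|^p$ produces
$$
1+\|u_n\|^{q-p}\!\int_\Omega a(z)|Dv_n|^q\,dz=\int_\Omega\frac{f(z,\|u_n\|v_n)\,v_n}{\|u_n\|^{p-1}}\,dz,
$$
whose left-hand side is at least $1$. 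The right-hand side is estimated by splitting again: on $\{\|u_n\|\,|v_n|\leq 1\}$, the monotonicity of $|f(z,x)|/|x|^{p-1}$ from $H(f)(iv)$ combined with the uniform bound $|f(z,\pm 1)|\leq 2\|a_0\|_\infty$ from $H(f)(i)$ gives contribution $\leq C\|v_n\|_p^p\to 0$; on the complement, the growth from $H(f)(i)$ bounds the contribution by $C\|u_n\|^{r-p}\|v_n\|_r^r\to 0$ since $r>p$ and $\{\|v_n\|_r\}$ is Sobolev-bounded. This is the desired contradiction.

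The hardest step will be this final rescaling argument: since $H(f)(iii)$ controls $f$ near $0$ only by $|x|^{q-1}$ (weaker than the usual $|x|^{p-1}$), the classical Nehari bound on $\Ncal$ from below in norm is unavailable, so the proof must instead combine the pointwise monotonicity from $H(f)(iv)$ with the compactness of the embedding $W_0^{1,p}\hookrightarrow L^p$ to exploit the full strength of the double-phase structure.
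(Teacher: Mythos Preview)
Your argument is correct, but it follows a very different route from the paper's.

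The paper proves $m>0$ \emph{directly}: it first shows, using the growth estimate $F(z,x)\le\frac{\varepsilon}{q}|x|^q+c_\varepsilon|x|^r$ together with Lemma~\ref{lem1} (the identity $\vartheta=\hat\lambda_1$), that $\varphi(u)\ge c_{10}\|u\|^p-c_6\|u\|^r$ for $\|u\|_p\le 1$, so there is a small sphere $\|u\|=\rho$ on which $\varphi\ge\eta_0>0$. Then for any $u\in\Ncal$ it picks $\tau_u>0$ with $\|\tau_u u\|=\rho$ and invokes Proposition~\ref{prop4} (which says $\varphi(tu)\le\varphi(u)$ for all $t>0$ when $u\in\Ncal$) to conclude $\varphi(u)\ge\varphi(\tau_u u)\ge\eta_0$. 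This is short and conceptual: the Nehari manifold sits above the mountain-pass sphere because it consists of ray-maximizers.

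Your approach is by contradiction and is essentially self-contained: you neither use Lemma~\ref{lem1} nor Proposition~\ref{prop4}. Instead you exploit the decomposition $\varphi|_{\Ncal}=\left(\frac{1}{q}-\frac{1}{p}\right)\int a|Du|^q+\frac{1}{p}\int(fu-pF)$ into two nonnegative pieces, run the interpolation argument (which the paper only uses later, in Proposition~\ref{prop6}, for coercivity) to bound $\{u_n\}$ in $W^{1,p}_0(\Omega)$, pass to a weak limit that must vanish because $\int a|Du_n|^q\to 0$, deduce $\|u_n\|\to 0$ from the Nehari identity, and then rescale. The last step---controlling $\int f(z,u_n)u_n/\|u_n\|^p$ via the monotonicity of $|f(z,x)|/|x|^{p-1}$ from $H(f)(iv)$ combined with $v_n\to 0$ in $L^p$---is the genuine substitute for the paper's use of Proposition~\ref{prop4}. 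Your route is longer and duplicates part of the later coercivity proof, but it has the virtue of showing that $m>0$ can be obtained from the pointwise structure of $f$ and the double-phase operator alone, without first establishing the ray-maximizer property.
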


\begin{proof}
  Let $u\in W^{1,p}_0(\Omega)\setminus\{0\}$. We have
$$
\varphi(u)\geq \frac{1}{p}\|Du\|_p^p +\frac{1}{q}\left[\int_\Omega a(z)|Du|^q dz-\varepsilon\|u\|_q^q\right]-c_6\|u\|^r \mbox{ for some $c_6>0$ (see \eqref{eq9}).  }
$$

Let $c_7\in \left(0,\frac{1}{p}\right)$. Since $q<p$, for $u\in W^{1,p}_0(\Omega)$ with $0<\|u\|_p\leq 1$, we have
\begin{eqnarray*}
  \varphi(u)\!\! &\geq&\!\!\! \left(\frac{1}{p}-c_7\right)\|Du\|_p^p+ c_7\left[\|Du\|_p^p+\frac{1}{q c_7}\int_\Omega a(z)|Du|^q dz-\varepsilon c_8\|u\|_p^p\right]-c_6\|u\|^r \mbox{ for some $c_8>0$ } \\
   \!\! &=&\!\!\! \left(\frac{1}{p}-c_7\right)\|Du\|_p^p +c_7\left[\|Du\|_p^p+\frac{p}{q}\int_\Omega \frac{a(z)}{c_9}|Du|^q dz-\varepsilon c_8\|u\|_p^p\right]-c_6\|u\|^r \mbox{ with } c_9=\frac{1}{p c_7}>0 \\
   \!\! &\geq&\!\!\! \left(\frac{1}{p}-c_7\right) \|Du\|_p^p +c_7(\hat{\lambda}-\varepsilon c_8)\|u\|_p^p-c_6\|u\|^r \mbox{ (see Lemma \ref{lem1}). }
\end{eqnarray*}

Choosing $\varepsilon\in (0, {\hat{\lambda}_1}/{c_8})$, we obtain
\begin{eqnarray*}
  \varphi(u) &\geq& c_{10}\|u\|^p-c_6\|u\|^r \\
   && \mbox{ for some } c_{10}>0 \mbox{ and all } u\in W^{1,p}_0(\Omega) \mbox{ with } \|u\|_p\leq 1.
\end{eqnarray*}

Because $p<r$, we can find $\rho\in (0,1)$ small, $\rho\leq \frac{1}{\hat{\lambda}_1}$ such that
$$
\varphi(u)\geq\eta_0>0 \mbox{ for all } \|u\|=\rho \mbox{ (note that $\|u\|_p\leq \frac{1}{\hat{\lambda}_1}\|u\|\leq \rho$).}
$$

Now we consider $u\in \Ncal$ and choose $\tau_u>0$ such that $\tau_u\|u\|=\rho$. Then by Proposition \ref{prop4}, we have
\begin{eqnarray*}
   && \varphi(u)\geq \varphi(\tau_u u)\geq \eta_0>0, \\
  &\Rightarrow& m=\inf_{\Ncal} \varphi>0.
\end{eqnarray*}
The proof is now complete.
\end{proof}

The Nehari manifold is much smaller than $W^{1,p}_0(\Omega) $ and so some properties of $\varphi$ which evidently fail globally, can be true for $\varphi|_{\Ncal}$. This is illustrated in the next proposition.

\begin{prop}\label{prop6}
  If hypotheses $H(a)$, $H(f)$ hold, then $\varphi|_{\Ncal}$ is coercive.
\end{prop}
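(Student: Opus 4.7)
The plan is to argue by contradiction, exploiting the identity one gets for $\varphi$ restricted to $\Ncal$ together with the superlinearity condition $H(f)(ii)$ and an interpolation argument made possible by the lower bound $\tau>(r-p)N/p$.

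First I would rewrite $\varphi$ on $\Ncal$ in the standard Nehari-friendly form. Using $\langle\varphi'(u),u\rangle=0$ to eliminate $\|Du\|_p^p$, one obtains
\begin{equation*}
\varphi(u)=\varphi(u)-\tfrac{1}{p}\langle\varphi'(u),u\rangle
=\Bigl(\tfrac{1}{q}-\tfrac{1}{p}\Bigr)\int_\Omega a(z)|Du|^q\,dz
+\tfrac{1}{p}\int_\Omega\bigl[f(z,u)u-pF(z,u)\bigr]\,dz.
\end{equation*}
Both terms are nonnegative: the first because $q<p$ and $a\geq 0$, the second because hypothesis $H(f)(iv)$ implies that $x\mapsto f(z,x)x-pF(z,x)$ is nondecreasing in $|x|$ and vanishes at $0$ (this was noted in the remark after $H(f)$). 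Combining this monotonicity with the $\liminf$ in $H(f)(ii)$, I can choose $M_0>0$ so that
\begin{equation*}
f(z,x)x-pF(z,x)\ \geq\ \tfrac{\beta_0}{2}|x|^\tau-c \qquad\text{for a.a. }z\in\Omega,\ \text{all }x\in\RR,
\end{equation*}
for some constant $c>0$.

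Now suppose for contradiction there is $\{u_n\}\subseteq\Ncal$ with $\|u_n\|\to\infty$ and $\varphi(u_n)\leq M$. The identity above immediately gives two bounds:
\begin{equation*}
\int_\Omega a(z)|Du_n|^q\,dz\ \leq\ C_1,\qquad \|u_n\|_\tau^\tau\ \leq\ C_2.
\end{equation*}
On the other hand, the Nehari relation reads
\begin{equation*}
\|u_n\|^p=\|Du_n\|_p^p\ \leq\ \int_\Omega f(z,u_n)u_n\,dz,
\end{equation*}
and by $H(f)(i)$ we get $\int_\Omega f(z,u_n)u_n\,dz\leq C_3(1+\|u_n\|_r^r)$.

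The crux is therefore to control $\|u_n\|_r^r$ by $\|u_n\|^{p-\epsilon}$, using only that $\|u_n\|_\tau$ is bounded and the Sobolev embedding $W^{1,p}_0(\Omega)\hookrightarrow L^{p^*}(\Omega)$. If $r\leq\tau$ this is immediate since $\Omega$ is bounded. If $\tau<r<p^*$ I interpolate: write $\tfrac{1}{r}=\tfrac{\alpha}{\tau}+\tfrac{1-\alpha}{p^*}$ and apply H\"older, giving $\|u_n\|_r\leq\|u_n\|_\tau^\alpha\|u_n\|_{p^*}^{1-\alpha}\leq C_4\|u_n\|^{1-\alpha}$. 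Hence $\|u_n\|^p\leq C_5(1+\|u_n\|^{r(1-\alpha)})$. A direct computation shows
\begin{equation*}
r(1-\alpha)\ =\ \frac{r-\tau}{1-\tau/p^*},
\end{equation*}
and the elementary algebraic manipulation using $p^*=Np/(N-p)$ shows that the requirement $r(1-\alpha)<p$ is exactly equivalent to $\tau>(r-p)N/p$, which is assumed in $H(f)(ii)$. This contradicts $\|u_n\|\to\infty$ and finishes the proof.

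The main obstacle is the interpolation step: one must verify that the threshold $(r-p)N/p$ imposed in $H(f)(ii)$ matches precisely what is needed for $r(1-\alpha)<p$ in the interpolation inequality. The rest — rewriting $\varphi$ using the Nehari constraint and extracting the $L^\tau$ bound — is essentially bookkeeping.
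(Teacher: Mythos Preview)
Your argument is essentially the paper's own: both subtract $\tfrac{1}{p}\langle\varphi'(u),u\rangle$ from $\varphi(u)$ (the paper does it by adding \eqref{eq11} and \eqref{eq12}), extract the $L^\tau$ bound from $H(f)(ii)$, and then interpolate between $L^\tau$ and $L^{p^*}$ to show $\|u_n\|_r^r\leq C\|u_n\|^{tr}$ with $tr<p$.

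One omission: your interpolation step and the algebraic check $r(1-\alpha)<p\Leftrightarrow\tau>(r-p)N/p$ use $p^*=\tfrac{Np}{N-p}$, i.e.\ $p<N$. The paper treats the borderline case $N=p$ separately (then $p^*=+\infty$ and one only has $W^{1,p}_0\hookrightarrow L^s$ for all finite $s$); there one replaces $p^*$ by a sufficiently large $s>r$ so that $tr=\dfrac{s(r-\tau)}{s-\tau}<p$, which is possible precisely because $N=p$ forces $r-p<\tau$. You should add this case to make the argument complete.
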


\begin{proof}
  Evidently, it suffices to show that if $\{u_n\}_{n\geq1}\subseteq \Ncal$ and $\varphi(u_n)\leq M$ for some $M>0$ and all $n\in \NN$, then $\{u_n\}_{n\geq 1}\subseteq W^{1,p}_0(\Omega)$ is bounded.

We have
\begin{equation}\label{eq11}
  \|Du_n\|_p^p +\frac{p}{q} \int_\Omega a(z)|Du_n|^q dz-\int_\Omega p F(z,u_n) dz\leq p M \mbox{ for all } n\in \NN.
\end{equation}

Since $u_n\in \Ncal$, we have
\begin{eqnarray}\nonumber
   && \langle \varphi'(u_n), u_n \rangle=0 \mbox{ for all }n\in \NN, \\
  &\Rightarrow& -\|Du_n\|_p^p-\int_\Omega a(z)|Du_n|^q dz+\int_\Omega f(z,u_n) u_n dz=0 \mbox{ for all } n\in \NN. \label{eq12}
\end{eqnarray}

We add \eqref{eq11} and \eqref{eq12} and have
\begin{eqnarray}\nonumber
   && \left(\frac{p}{q}-1\right)\int_\Omega a(z)|Du_n|^q dz+\int_\Omega\left[f(z,u_n)u_n-pF(z,u_n)\right]dz\leq pM \mbox{ for all }n\in \NN,\\
   &\Rightarrow& \int_\Omega \left[f(z,u_n)u_n-pF(z,u_n)\right]dz\leq pM \mbox{ for all } n\in \NN \label{eq13}\\ \nonumber
   &&\mbox{ (recall that $q<p$ and see hypothesis $H(a)$).}
\end{eqnarray}

Hypotheses $H(f)(i),(ii)$ imply that given $\beta_1\in (0,\beta_0)$, we can find $c_{11}=c_{11}(\beta_1)>0$ such that
\begin{equation}\label{eq14}
  \beta_1|x|^\tau -c_{11}\leq f(z,x)x-pF(z,x) \mbox{ for a.a. } z\in \Omega \mbox{ and all } x\in \RR.
\end{equation}

Using \eqref{eq14} in \eqref{eq13}, we obtain that
\begin{equation}\label{eq15}
  \{u_n\}_{n\geq1}\subseteq L^\tau(\Omega) \mbox{ is bounded. }
\end{equation}

We first  assume that $N\not=p$. It is clear by hypothesis $H(f)(ii)$ that without any loss of generality, we may assume that $\tau<r<p^*$. Let $t\in(0,1)$ such that
\begin{equation}\label{eq16}
  \frac{1}{r}=\frac{1-t}{\tau}+\frac{t}{p^*}.
\end{equation}

The interpolation inequality (see for example Papageorgiou and Winkert \cite[p.116]{20Pap-Win}) implies that
\begin{eqnarray}\nonumber
    \|u_n\|_r&\leq& \|u_n\|_\tau^{1-t}\|u_n\|_{p^*}^t \mbox{ for all } n\in \NN, \label{eq17} \\
   \Rightarrow \|u_n\|_r^r&\leq&c_{12}\|u_n\|^{tr} \mbox{ for some } c_{12}>0\mbox{ and all }n\in \NN \\ \nonumber
   && \mbox{ (see \eqref{eq15} and use the Sobolev embedding theorem). }
\end{eqnarray}

Hypothesis $H(f)(i)$ implies that
\begin{equation}\label{eq18}
  f(z,x)x\leq c_{13}\left(1+|x|^r\right) \mbox{ for a.a. } z\in \Omega, \mbox{ all } x\in \RR \mbox{ and some } c_{13}>0.
\end{equation}

From \eqref{eq12} we have
\begin{eqnarray}\nonumber
  \|Du_n\|_p^p +\int_\Omega a(z)|Du_n|^q dz &=& \int_\Omega f(z,u_n)u_n dz  \\ \nonumber
   &\leq& c_{14} \left(1+\|u_n\|_r^r\right) \mbox{ for some } c_{14}>0 \mbox{ and all } n\in \NN \mbox{ (see \eqref{eq18}) } \\ \nonumber
   &\leq& c_{15} \left(1+\|u_n\|^{tr}\right) \mbox{ for some } c_{15}>0 \mbox{ and all } n\in \NN \mbox{ (see \eqref{eq17}), }\\
    \Rightarrow \|u_n\|^p\leq c_{15}\left(1+\|u_n\|^{tr}\right)&&\!\!\!\!\!\!\!\!\!\!\!\! \mbox{ for all } n\in \NN \mbox{ (see hypothesis $H(a)$). } \label{eq19}
\end{eqnarray}

From \eqref{eq16} and the condition on $\tau>1$ (see hypothesis $H(f)(ii)$), we see that $tr<p$. So, from \eqref{eq19} we can infer that
\begin{equation}\label{eq20}
  \{u_n\}_{n\geq1}\subseteq W^{1,p}_0(\Omega) \mbox{ is bounded. }
\end{equation}

Next, assume that $N=p$. In this case $p^*=+\infty$, but by the Sobolev embedding theorem we have
$$
W^{1,p}_0(\Omega)\hookrightarrow L^s(\Omega) \mbox{ for all } 1\leq s<\infty.
$$

So, for the previous argument to work, we need to replace $p^*(=+\infty)$, with $s>r$ so big that
$$
tr=\frac{s(r-\tau)}{s-\tau}<p \mbox{ (see \eqref{eq16} and note that $N=p\Rightarrow r-p<\tau$). }
$$

With such a choice of $s>r$, the previous argument works and we again reach \eqref{eq20}. Therefore we can conclude that $\varphi|_{\Ncal}$ is coercive.
\end{proof}

\begin{prop}\label{prop7}
  If hypotheses $H(a)$, $H(f)$ hold, then we can find $\hat{u}\in \Ncal$ such that $\varphi(\hat{u})=m=\displaystyle{\inf_{\Ncal}\varphi>0}$.
\end{prop}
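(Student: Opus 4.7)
I follow the standard direct minimization scheme on the Nehari manifold. Take a minimizing sequence $\{u_n\}_{n\geq1}\subseteq \Ncal$ with $\varphi(u_n)\to m$. By the coercivity of $\varphi|_{\Ncal}$ (Proposition \ref{prop6}), $\{u_n\}$ is bounded in $W^{1,p}_0(\Omega)$, so, passing to a subsequence, I may assume $u_n\rightharpoonup \hat u$ weakly in $W^{1,p}_0(\Omega)$ and, by the Rellich--Kondrachov theorem together with $r<p^*$, $u_n\to\hat u$ strongly in $L^q(\Omega)$ and in $L^r(\Omega)$.

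The key step is to show that $\hat u\neq 0$, which I do by contradiction. Combining hypotheses $H(f)(i)$ and $H(f)(iii)$, given $\varepsilon>0$ there exists $c_\varepsilon>0$ such that $|f(z,x)x|\leq \varepsilon|x|^q+c_\varepsilon|x|^r$ and $|F(z,x)|\leq (\varepsilon/q)|x|^q+(c_\varepsilon/r)|x|^r$ for a.a.\ $z\in\Omega$ and all $x\in\RR$. If $\hat u=0$, the strong convergences yield $\int_\Omega f(z,u_n)u_n\,dz\to 0$ and $\int_\Omega F(z,u_n)\,dz\to 0$. Combined with the Nehari identity $\|Du_n\|_p^p+\int_\Omega a(z)|Du_n|^q\,dz=\int_\Omega f(z,u_n)u_n\,dz$, this forces $\|u_n\|\to 0$ and then $\varphi(u_n)\to 0$, contradicting $\varphi(u_n)\to m>0$.

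With $\hat u\neq 0$ in hand, Proposition \ref{prop2} yields a unique $t_{\hat u}>0$ such that $t_{\hat u}\hat u\in\Ncal$. For each $n$, Proposition \ref{prop4} applied to $u_n\in\Ncal$ gives $\varphi(t_{\hat u}u_n)\leq \varphi(u_n)$. Using the weak lower semicontinuity of $u\mapsto \|Du\|_p^p$ and of $u\mapsto \int_\Omega a(z)|Du|^q\,dz$ on $W^{1,p}_0(\Omega)$, together with the continuity of $u\mapsto \int_\Omega F(z,u)\,dz$ along sequences strongly convergent in $L^r(\Omega)$ (which follows from $H(f)(i)$ and the compact embedding $W^{1,p}_0(\Omega)\hookrightarrow L^r(\Omega)$), I can pass to the liminf to obtain $\varphi(t_{\hat u}\hat u)\leq \liminf_n \varphi(t_{\hat u}u_n)\leq \liminf_n \varphi(u_n)=m$. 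Since $t_{\hat u}\hat u\in\Ncal$ implies $\varphi(t_{\hat u}\hat u)\geq m$, equality holds and $t_{\hat u}\hat u$ realizes the infimum. The main technical hurdle is the non-vanishing step $\hat u\neq 0$, where the ``$q$-sublinearity at $0$'' in $H(f)(iii)$ is essential in preventing $\Ncal$ from accumulating at the origin; the rest is bookkeeping with semicontinuity and compactness.
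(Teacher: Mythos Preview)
Your proof is correct. The non-vanishing step and the semicontinuity/compactness ingredients are handled properly, and the chain
\[
\varphi(t_{\hat u}\hat u)\ \leq\ \liminf_n \varphi(t_{\hat u}u_n)\ \leq\ \liminf_n \varphi(u_n)\ =\ m
\]
is valid exactly for the reasons you give: the gradient terms are convex hence sequentially weakly lower semicontinuous, while the $F$-term is continuous along $L^r$-convergent sequences by the growth bound in $H(f)(i)$. The minimizer you produce is $t_{\hat u}\hat u$ rather than the weak limit $\hat u$ itself, but that suffices for the statement (just relabel).

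The paper argues a bit differently after establishing $\hat u\neq 0$. Instead of projecting the minimizing sequence, it passes to the limit in the Nehari identity to obtain $\langle\varphi'(\hat u),\hat u\rangle\leq 0$ and then rules out the strict inequality by contradiction: if $\mu'_{\hat u}(1)<0$, then the Nehari point $\hat t=t_{\hat u}$ lies in $(0,1)$; rewriting $\varphi$ on $\Ncal$ as $\int_\Omega\!\big[\tfrac{1}{p}f(z,v)v-F(z,v)\big]dz+(\tfrac{1}{q}-\tfrac{1}{p})\int_\Omega a|Dv|^q\,dz$ and invoking the monotonicity of $x\mapsto f(z,x)x-pF(z,x)$ from $H(f)(iv)$ yields $m\leq \varphi(\hat t\hat u)<m$, so in fact $\hat u\in\Ncal$ and $\varphi(\hat u)=m$. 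Your route is cleaner in that it avoids this case split and the direct appeal to $H(f)(iv)$, using only Propositions~\ref{prop2} and~\ref{prop4} together with weak lower semicontinuity; the paper's route buys the extra information that the weak limit $\hat u$ itself lies on $\Ncal$ without rescaling.
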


\begin{proof}
  Let $\{u_n\}_{n\geq1}\subseteq \Ncal$ be a minimizing sequence, that is,
$$
\varphi(u_n)\downarrow m\mbox{ as } n\rightarrow\infty.
$$

From Proposition \ref{prop6}, we have that
$$
\{u_n\}_{n\geq1}\subseteq W^{1,p}_0(\Omega) \mbox{ is bounded. }
$$

So, we may assume that
\begin{equation}\label{eq21}
  u_n\overset{w}{\longrightarrow} \hat{u} \mbox{ in } W^{1,p}_0(\Omega) \mbox{ and } u_n\rightarrow \hat{u} \mbox{ in } L^r(\Omega).
\end{equation}

Since $u_n\in \Ncal$, we have
\begin{equation}\label{eq22}
  \|Du_n\|_p^p+\int_\Omega a(z)|Du_n|^q dz=\int_\Omega f(z,u_n) u_n dz \mbox{ for all } n\in \NN.
\end{equation}

In \eqref{eq22} we pass to the limit as $n\rightarrow\infty$ and use \eqref{eq21} and the weak lower semicontinuity of the norm functional. We obtain
\begin{equation}\label{eq23}
  \|D\hat{u}\|_p^p +\int_\Omega a(z)|D\hat{u}|^q dz\leq \int_\Omega f(z,\hat{u})\hat{u} dz.
\end{equation}

If $\hat{u}=0$, then from \eqref{eq21} and \eqref{eq22}, we see that
\begin{eqnarray*}
   && u_n\rightarrow 0 \mbox{ in } W^{1,p}_0(\Omega), \\
  &\Rightarrow& \varphi(u_n) \rightarrow m=\varphi(0)=0,
\end{eqnarray*}
a contradiction to Proposition \ref{prop5}. Therefore $\hat{u}\not=0$.

If in \eqref{eq23} we have equality, then $\hat{u}\in \Ncal$ and $\varphi(\hat{u})=m$.

So, suppose that
\begin{equation}\label{eq24}
  \|D\hat{u}\|_p^p+\int_\Omega a(z) |D\hat{u}|^q dz<\int_\Omega f(z,\hat{u})\hat{u} dz.
\end{equation}

Using the fibering map $\mu_{\hat{u}}(\cdot)$ from the proof of Proposition \ref{prop4} and from \eqref{eq24} we infer that
\begin{eqnarray}\nonumber
  && \mu_{\hat{u}}(1)<0, \\
  &\Rightarrow& \hat{t}=t_{\hat{u}}\in (0,1) \mbox{ (see \eqref{eq10} and Proposition \ref{prop2}). } \label{eq25}
\end{eqnarray}

We have
\begin{eqnarray*}
  m &\leq& \varphi(\hat{t}\hat{u}) \\
   &=& \frac{1}{p}\|D(\hat{t}\hat{u})\|_p^p+\frac{1}{q}\int_\Omega a(z) |D(t\hat{u})|^q dz-\int_\Omega F(z,\hat{t}\hat{u})dz \\
   &=& \frac{1}{p}\left[\int_\Omega f(z,\hat{t}\hat{u})(\hat{t}\hat{u})dz-\int_\Omega a(z)|D(t\hat{u})|^q dz\right] \\
   &+& \frac{1}{q} \int_\Omega a(z)|D(\hat{t}\hat{u})|^q dz-\int_\Omega F(z,\hat{t}\hat{u})dz \mbox{ (since $\hat{t}\hat{u}\in \Ncal$) }\\
   &=& \int_\Omega \left[\frac{1}{p}f(z,t\hat{u})(t\hat{u})-F(z,\hat{t}\hat{u})\right]dz+\left(\frac{1}{q}-\frac{1}{p}\right)\int_\Omega a(z)|D\hat{u}|^q dz \\
   &<& \int_\Omega \left[\frac{1}{p}f(z,\hat{u})\hat{u}-F(z,\hat{u})\right]dz+\left(\frac{1}{q}-\frac{1}{p}\right)\int_\Omega a(z)|D\hat{u}|^q dz \\
   &&\mbox{ (see \eqref{eq25}, hypothesis $H(f)(iv)$ and recall that $q<p$) } \\
   &\leq& \liminf_{n\rightarrow\infty}\left[\int_\Omega \left[\frac{1}{p}f(z,u_n)u_n-p F(z,u_n)\right]dz+\left(\frac{1}{q}-\frac{1}{p}\right)\int_\Omega a(z)|Du_n|^q dz\right]\\
   && \mbox{ (see \eqref{eq21} and use Lemma 1.12 of Heinonen, Kilpel\"ainen and Martio \cite[p.16]{12Hei-Kil-Mar}) }\\
   &=&m,
\end{eqnarray*}
a contradiction. Therefore we conclude that $\hat{u}\in \Ncal$ and $\varphi(\hat{u})=m$.
\end{proof}

The next proposition shows that the Nehari manifold is a natural constraint for $\hat{u}$ (see Papageorgiou, R\u adulescu and Repov\v s \cite[p.425]{18Pap-Rad-Rep}).  In what follows, we denote
$$
K_\varphi=\left\{u\in W^{1,p}_0(\Omega):\:\varphi'(u)=0\right\} \mbox{ (the critical set of $\varphi$). }
$$

\begin{prop}\label{prop8}
  If hypotheses $H(a)$, $H(f)$ hold and $\hat{u}\in \Ncal$ satisfies $\varphi(\hat{u})=m$, then $\hat{u}\in K_\varphi$, $\hat{u}$ is a solution of \eqref{eq1}, $\hat{u}\in W^{1,p}_0(\Omega)\cap L^{\infty}(\Omega)$ and $\hat{u}$ does not change sign.
\end{prop}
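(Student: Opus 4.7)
\emph{Plan.} My plan is to establish the four assertions in the order they are stated, with the natural-constraint property $\hat{u}\in K_\varphi$ as the cornerstone; the remaining claims will follow by comparatively standard arguments.

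First I would prove $\hat{u}\in K_\varphi$ by Lagrange multipliers. Setting $\Psi(u)=\langle\varphi'(u),u\rangle$, so that $\Ncal=\{u\neq 0:\Psi(u)=0\}$, hypothesis $H(f)(i)$ makes $\Psi\in C^1(W^{1,p}_0(\Omega))$. A direct computation, simplified by using $\hat{u}\in\Ncal$ to replace $\|D\hat{u}\|_p^p$ by $\int_\Omega f(z,\hat{u})\hat{u}\,dz-\int_\Omega a(z)|D\hat{u}|^q\,dz$, gives
$$
\langle\Psi'(\hat{u}),\hat{u}\rangle = -\int_\Omega\bigl[f'_x(z,\hat{u})\hat{u}^2-(p-1)f(z,\hat{u})\hat{u}\bigr]dz-(p-q)\int_\Omega a(z)|D\hat{u}|^q\,dz.
$$
Hypothesis $H(f)(iv)$ makes the first term non-positive, while $p>q$, $H(a)$, and the fact that $\hat{u}\in W^{1,p}_0(\Omega)\setminus\{0\}$ forces $|D\hat{u}|>0$ on a set of positive measure, together make the second term strictly negative. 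Hence $\langle\Psi'(\hat{u}),\hat{u}\rangle<0$; in particular $\Psi'(\hat{u})\neq 0$, so $\Ncal$ is a $C^1$-manifold near $\hat{u}$ and the Lagrange multiplier rule yields $\lambda\in\RR$ with $\varphi'(\hat{u})=\lambda\Psi'(\hat{u})$. Pairing with $\hat{u}$ gives $0=\lambda\langle\Psi'(\hat{u}),\hat{u}\rangle$, which forces $\lambda=0$, hence $\varphi'(\hat{u})=0$. This identity is exactly the weak formulation of \eqref{eq1}, so $\hat{u}$ is a solution.

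For the $L^\infty$ bound I would apply a Moser iteration (or De Giorgi truncation) to $(\hat{u}^\pm-k)^+$ with $k>0$: the subcritical growth $|f(z,x)|\leq c(1+|x|^{r-1})$ from $H(f)(i)$ with $r<p^*$ is precisely what is needed to close the iteration, and the weighted $q$-Laplacian contributes only a nonnegative term in the test-function estimates, so the discontinuous, non-uniformly-positive weight does not spoil the scheme at this level of regularity. I expect this to be the main technical hurdle, since Lieberman's $C^{1,\alpha}$ theory is not applicable here and a dedicated bootstrap or reference is required.

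Finally, for the sign, suppose for contradiction that $\hat{u}$ changes sign, so that $\hat{u}^+\neq 0\neq \hat{u}^-$. Testing $\varphi'(\hat{u})=0$ against $\hat{u}^+$ and observing that $D\hat{u}=D\hat{u}^+$ and $f(z,\hat{u})=f(z,\hat{u}^+)$ hold on $\{\hat{u}>0\}$ while $\hat{u}^+=0=D\hat{u}^+$ elsewhere, yields $\langle\varphi'(\hat{u}^+),\hat{u}^+\rangle=0$, i.e.\ $\hat{u}^+\in\Ncal$; the analogous calculation with $-\hat{u}^-$ gives $-\hat{u}^-\in\Ncal$. Proposition \ref{prop5} then gives $\varphi(\hat{u}^+)\geq m$ and $\varphi(-\hat{u}^-)\geq m$, and the disjoint supports of $\hat{u}^\pm$ make $\varphi$ additive, so $\varphi(\hat{u})=\varphi(\hat{u}^+)+\varphi(-\hat{u}^-)\geq 2m>m$, contradicting $\varphi(\hat{u})=m$. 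Hence $\hat{u}$ has constant sign.
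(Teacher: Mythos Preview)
Your proposal is correct and follows essentially the same route as the paper: the constraint functional you call $\Psi$ is exactly the paper's $k$, and both arguments use the Lagrange multiplier rule together with the identity $\langle\Psi'(\hat u),\hat u\rangle<0$ (the paper phrases this as a contradiction from assuming the multiplier is nonzero, you phrase it as $\Psi'(\hat u)\neq 0$ up front---mathematically the same step), then the sign argument via $\varphi(\hat u)=\varphi(\hat u^+)+\varphi(-\hat u^-)\geq 2m$ is identical. For the $L^\infty$ bound the paper simply invokes Theorem~7.1 of Ladyzhenskaya--Uraltseva rather than sketching the Moser iteration, so what you flag as the ``main technical hurdle'' is in fact handled by a direct citation.
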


\begin{proof}
  Consider the function $k:W^{1,p}_0(\Omega)\rightarrow\RR$ defined by
$$
k(u)=\|Du\|_p^p+\int_\Omega a(z)|Du|^q dz- \int_\Omega f(z,u)u dz \mbox{ for all } u\in W^{1,p}_0(\Omega).
$$

Evidently, $k\in C^1(W^{1,p}_0(\Omega))$ and we have
$$
\langle k'(u),h\rangle=p\langle A_p(u),h\rangle+q\int_\Omega a(z)|Du|^{q-2}(Du,Dh)_{\RR^N}dz-\int_\Omega \left[f'_x(z,u)u+f(z,u)\right]h dz
$$
for all $h\in W^{1,p}_0(\Omega)$.

From Proposition \ref{prop7}, we know that
$$
\varphi(\hat{u})=m=\inf\left\{\varphi(u):\:k(u)=0,\;u\in W^{1,p}_0(\Omega)\setminus\{0\}\right\}.
$$

Then by the Lagrange multiplier rule (see Papageorgiou, R\u adulescu and Repov\v s \cite[Theorem 5.5.9]{18Pap-Rad-Rep}), we can find $\vartheta\geq0$ such that
\begin{eqnarray}
  && \varphi'(\hat{u})+\vartheta k'(\hat{u})=0 \mbox{ in } W^{-1,p'}(\Omega)=W^{1,p}_0(\Omega)^*, \label{eq26} \\ \nonumber
  &\Rightarrow& \langle \varphi'(\hat{u}),\hat{u}\rangle+\vartheta\langle k'(\hat{u}),\hat{u}\rangle=0, \\ \nonumber
  &\Rightarrow& \vartheta\langle k'(\hat{u}),\hat{u}\rangle=0 \mbox{ (since $\hat{u}\in \Ncal$). }
\end{eqnarray}

If $\vartheta\not=0$, then we must have
$$
\langle k'(\hat{u}),\hat{u}\rangle=0,
$$
\begin{eqnarray*}
  &\Rightarrow& p\|D\hat{u}\|_p^p+q\int_\Omega a(z)|D\hat{u}|^q dz-\int_\Omega f(z,\hat{u})\hat{u} dz=\int_\Omega f'_x(z,\hat{u})\hat{u} dz, \\
  &\Rightarrow& p\left[\|D\hat{u}\|_p^p+\int_\Omega a(z)|Du|^q dz-\int_\Omega f(z,\hat{u})\hat{u}dz\right]+(q-p)\int_\Omega a(z)|Du|^q dz \\
   &=& \int_\Omega \left[f'_x(z,\hat{u})\hat{u}^2-(p-1)f(z,\hat{u})\hat{u}\right]dz, \\
   &\Rightarrow& 0>(q-p)\int_\Omega a(z)|Du|^p dz\geq \int_\Omega \left[f'_x(z,\hat{u})\hat{u}^2-(p-1)f(z,\hat{u})\hat{u}\right]dz\geq0 \\
  && \mbox{ (recall that $\hat{u}\in \Ncal$, $q<p$ and see hypotheses $H(a)$, $H(f)(iv)$), }
\end{eqnarray*}
a contradiction. Therefore $\vartheta=0$ and so from \eqref{eq26} we have
\begin{eqnarray}
   && \varphi'(\hat{u})=0 \mbox{ in } W^{-1,p'}(\Omega), \label{eq27}\\ \nonumber
   &\Rightarrow& \hat{u}\in K_\varphi \mbox{ and } \hat{u} \mbox{ is a solution of \eqref{eq1}.}
\end{eqnarray}

Invoking Theorem 7.1 of Ladyzhenskaya and Uraltseva \cite[p.286]{13Lad-Ura} we have
$$
\hat{u}\in W^{1,p}_0(\Omega)\cap L^\infty(\Omega).
$$

We claim that $\hat{u}\in \Ncal$ has fixed sign. Arguing indirectly, suppose that $\hat{u}$ is nodal (sign-changing). Then $u^{\pm}\not\equiv0$. From \eqref{eq27}, we have
\begin{equation}\label{eq28}
  \langle \varphi'(\hat{u}),h\rangle=0 \mbox{ for all } h\in W^{1,p}_0(\Omega).
\end{equation}

In \eqref{eq28} we first choose $h=\hat{u}^+\in W^{1,p}_0(\Omega)$ and then $h=-\hat{u}^{-}\in W^{1,p}_0(\Omega)$. We obtain
\begin{eqnarray*}
  && \langle \varphi'(\hat{u}^+),\hat{u}^+\rangle=0 \mbox{ and } \langle\varphi'(-\hat{u}^-),-\hat{u}^-\rangle=0, \\
  &\Rightarrow& \hat{u}^+ \mbox{ and } -\hat{u}^-\in \Ncal.
\end{eqnarray*}

We have
\begin{eqnarray*}
   && m=\varphi(\hat{u})=\varphi(\hat{u}^+)+\varphi(-\hat{u}^-)\geq 2m, \\
   &\Rightarrow& m=0, \mbox{ a contradiction since $m>0$ (see Proposition \ref{prop5}). }
\end{eqnarray*}

We conclude that $\hat{u}$ must have fixed sign.
\end{proof}

\section{Nodal solutions}

In this section we produce a nodal solution for problem \eqref{eq1}. To this end, we employ the following set, which contains all nodal solutions of \eqref{eq1}
$$
\Ncal_0=\{y\in W^{1,p}_0(\Omega):\:y^+\in \Ncal,\, -y^-\in \Ncal\}.
$$

\begin{prop}\label{prop9}
  If hypotheses $H(a)$, $H(f)$ hold, then $\Ncal_0\not=\emptyset$.
\end{prop}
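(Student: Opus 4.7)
The proof plan is straightforward: exhibit an element of $\Ncal_0$ by constructing a sign-changing function and then rescaling its positive and negative parts independently via Proposition \ref{prop2}.

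First, I would produce any sign-changing function $y\in W^{1,p}_0(\Omega)$. Concretely, pick two disjoint open balls $B_1,B_2\subseteq\Omega$ and take $\varphi_1,\varphi_2\in C_c^\infty(\Omega)$ with $\mathrm{supp}\,\varphi_i\subseteq B_i$, $\varphi_1\geq 0$, $\varphi_2\leq 0$, and $\varphi_i\not\equiv 0$. Set $y=\varphi_1+\varphi_2$, so $y^+=\varphi_1\not\equiv 0$ and $y^-=-\varphi_2\not\equiv 0$, with $y^+,y^-\in W^{1,p}_0(\Omega)$ by standard lattice properties of Sobolev spaces.

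Next, since $y^+\ne 0$ and $-y^-\ne 0$, Proposition \ref{prop2} supplies unique scalars $t_1,t_2>0$ such that
$$t_1 y^+\in\Ncal\qquad\text{and}\qquad -t_2 y^-\in\Ncal.$$
Define $\tilde y = t_1 y^+ - t_2 y^-\in W^{1,p}_0(\Omega)$. Because $y^+$ and $y^-$ have disjoint supports, and $t_1,t_2>0$, one has $\tilde y^+ = t_1 y^+$ and $\tilde y^- = t_2 y^-$ pointwise a.e. Consequently $\tilde y^+\in\Ncal$ and $-\tilde y^- = -t_2 y^-\in\Ncal$, which by definition gives $\tilde y\in\Ncal_0$. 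Hence $\Ncal_0\ne\emptyset$.

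There is no real obstacle here: the only point worth checking is that the decomposition of $\tilde y$ into its positive and negative parts agrees with the scaled pieces $t_1 y^+$ and $t_2 y^-$, which is immediate from the disjointness of the supports of $y^+$ and $y^-$. The existence of a sign-changing test function is automatic from the fact that $\Omega$ is open and has positive measure, so it contains two disjoint balls on which we can freely prescribe signs.
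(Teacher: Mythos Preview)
Your proof is correct and follows essentially the same approach as the paper's: pick a nodal $y\in W^{1,p}_0(\Omega)$, apply Proposition~\ref{prop2} separately to $y^+$ and $-y^-$ to obtain scalars $t_1,t_2>0$ with $t_1 y^+\in\Ncal$ and $-t_2 y^-\in\Ncal$, and set $\tilde y=t_1 y^+-t_2 y^-\in\Ncal_0$. Your explicit construction via bump functions on disjoint balls is a bit more detailed than the paper's (which simply invokes an arbitrary nodal Sobolev function), but the idea is identical; note also that the disjointness of the supports of $y^+$ and $y^-$ is automatic for any $y$, so your extra care there, while harmless, is not strictly needed.
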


\begin{proof}
  Consider $y\in W^{1,p}_0(\Omega)$ such that $y^+\not\equiv0$, $y^-\not\equiv0$ (that is, a nodal Sobolev function). According to Proposition \ref{prop2}, we can find $t_{\pm}>0$ such that
$$
t_+ y^+\in \Ncal \mbox{ and } t_-y^-\in \Ncal .
$$

We set
$$
v=t_+y^+-t_-y^-.
$$

Evidently, $v^+=t_+ y^+\in \Ncal$ and $v^-=t_-y^-\in \Ncal$. Therefore $v\in \Ncal_0\not=\emptyset$.
\end{proof}

We set
$$
m_0=\inf_{\Ncal_0} \varphi.
$$

\begin{prop}\label{prop10}
  If hypotheses $H(a)$, $H(f)$ hold, then there exists $y_0\in \Ncal_0$ such that $m_0=\varphi(y_0)$.
\end{prop}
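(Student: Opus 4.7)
I will adapt the scheme of Proposition~\ref{prop7}, working with the positive and negative parts in parallel. Take a minimizing sequence $\{y_n\}_{n\geq 1}\subseteq \Ncal_0$ with $\varphi(y_n)\downarrow m_0$. Since $y_n^+,-y_n^-\in\Ncal$, Proposition~\ref{prop6} applied to each yields uniform $W^{1,p}_0(\Omega)$-bounds on $\{y_n^{\pm}\}$. Passing to a subsequence, $y_n\overset{w}{\rightarrow} y_0$ in $W^{1,p}_0(\Omega)$, and by Rellich--Kondrachov $y_n^{\pm}\overset{w}{\rightarrow} y_0^{\pm}$ weakly in $W^{1,p}_0(\Omega)$ and $y_n^{\pm}\to y_0^{\pm}$ in $L^r(\Omega)$.

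The first hurdle is to show that $y_0^{\pm}\not=0$. Suppose, towards a contradiction, that $y_n^+\to 0$ in $L^r(\Omega)$. The Nehari identity $\|y_n^+\|^p+\int_\Omega a|Dy_n^+|^q dz=\int_\Omega f(z,y_n^+)y_n^+ dz$ combined with the standard bound $|f(z,x)x|\leq\varepsilon|x|^q+c_\varepsilon|x|^r$ coming from $H(f)(i),(iii)$ forces $\|y_n^+\|\to 0$; then $\varphi(y_n^+)\to\varphi(0)=0$, contradicting $\varphi|_\Ncal\geq m>0$ from Proposition~\ref{prop5}. The symmetric argument applies to $-y_n^-$.

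Next I pass to the limit in the two Nehari identities: strong $L^r$-convergence of $y_n^{\pm}$ handles the right-hand sides (via the growth bound in $H(f)(i)$), while weak lower semicontinuity of the convex, nonnegative functional $u\mapsto\|Du\|_p^p+\int_\Omega a|Du|^q dz$ handles the left. This delivers $\langle\varphi'(y_0^+),y_0^+\rangle\leq 0$ and $\langle\varphi'(-y_0^-),-y_0^-\rangle\leq 0$. By Proposition~\ref{prop2} there are unique $t_{\pm}>0$ with $t_+y_0^+,-t_-y_0^-\in\Ncal$; the shape of the fibering maps $\mu_{y_0^+}$ and $\mu_{-y_0^-}$ (positive near $0$ by~\eqref{eq10}, negative for large $t$ by~\eqref{eq8}, with a single critical point that is a maximum) combined with the two inequalities above gives $t_{\pm}\leq 1$.

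The main obstacle is ruling out the strict case. Assuming $t_+<1$ (say), set $v_0:=t_+y_0^+-t_-y_0^-\in\Ncal_0$, so $\varphi(v_0)\geq m_0$ by definition. Exploiting the identity
$$\varphi(u)=\int_\Omega\Bigl[\tfrac{1}{p}f(z,u)u-F(z,u)\Bigr]dz+\Bigl(\tfrac{1}{q}-\tfrac{1}{p}\Bigr)\int_\Omega a(z)|Du|^q dz\qquad\text{for } u\in\Ncal,$$
the monotonicity of $x\mapsto f(z,x)x-pF(z,x)$ in $|x|$ from $H(f)(iv)$, the strict comparison $(\tfrac{1}{q}-\tfrac{1}{p})t_+^q\int_\Omega a|Dy_0^+|^q dz<(\tfrac{1}{q}-\tfrac{1}{p})\int_\Omega a|Dy_0^+|^q dz$ guaranteed by $q<p$ together with $\int_\Omega a|Dy_0^+|^q dz>0$ (from $H(a)$ and $y_0^+\not\equiv 0$), and the analogous non-strict estimate on the negative part, I obtain
$$\varphi(v_0)<\int_\Omega\Bigl[\tfrac{1}{p}f(z,y_0)y_0-F(z,y_0)\Bigr]dz+\Bigl(\tfrac{1}{q}-\tfrac{1}{p}\Bigr)\int_\Omega a|Dy_0|^q dz.$$
Fatou's lemma applied to the nonnegative integrand $\frac{1}{p}f(z,x)x-F(z,x)$ (nonnegativity from $H(f)(iv)$ and $f(z,0)=F(z,0)=0$), together with weak lower semicontinuity of the weighted $q$-integral, bounds the right-hand side above by $\liminf_n\varphi(y_n)=m_0$. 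This yields $\varphi(v_0)<m_0$, a contradiction. Hence $t_+=t_-=1$, so $y_0\in\Ncal_0$, and weak lower semicontinuity of $\varphi$ together with the definition of $m_0$ gives $\varphi(y_0)=m_0$.
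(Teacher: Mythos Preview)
Your argument is correct, but it takes a longer route than the paper's. The paper does \emph{not} try to prove $t_+=t_-=1$. Instead, having obtained weak limits $v_1,v_2$ of $y_n^+,y_n^-$ and their Nehari scalings $t_1,t_2>0$, it simply \emph{defines} $y_0:=t_1v_1-t_2v_2\in\Ncal_0$ and closes the proof in one stroke via Proposition~\ref{prop4}: since $y_n^+,-y_n^-\in\Ncal$, one has $\varphi(y_n^+)\geq\varphi(t_1y_n^+)$ and $\varphi(-y_n^-)\geq\varphi(-t_2y_n^-)$ for these \emph{fixed} $t_1,t_2$, and then sequential weak lower semicontinuity of $\varphi$ gives
\[
m_0=\lim_n\bigl[\varphi(y_n^+)+\varphi(-y_n^-)\bigr]\geq\liminf_n\bigl[\varphi(t_1y_n^+)+\varphi(-t_2y_n^-)\bigr]\geq\varphi(t_1v_1)+\varphi(-t_2v_2)=\varphi(y_0)\geq m_0.
\]
This bypasses the fibering sign analysis, the inequality $t_\pm\leq1$, and the contradiction step built on the representation of $\varphi|_{\Ncal}$ and $H(f)(iv)$. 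Your approach, which essentially replays the mechanism of Proposition~\ref{prop7} on each sign component, is heavier but has the mild bonus of showing that the minimizer is the weak limit of $\{y_n\}$ itself (no rescaling). One small point you leave implicit: to invoke Proposition~\ref{prop6} on $\{y_n^+\}$ and $\{-y_n^-\}$ separately you need $\{\varphi(y_n^+)\}$ and $\{\varphi(-y_n^-)\}$ bounded above; this follows from $\varphi(y_n^+)+\varphi(-y_n^-)=\varphi(y_n)\to m_0$ together with the lower bound $\varphi|_{\Ncal}\geq m>0$ from Proposition~\ref{prop5}.
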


\begin{proof}
  Let $\{y_n\}_{n\geq1}\subseteq \Ncal_0$ be a minimizing sequence, that is,
\begin{equation}\label{eq29}
  \varphi(y_n)\downarrow m_0 \mbox{ as } n\rightarrow\infty.
\end{equation}

We have
\begin{eqnarray*}
  &&\varphi(y_n) = \varphi(y_n^+)+\varphi(-y_n^-)\geq 2m>0 \mbox{ for all $n\in \NN$ (see Proposition \ref{prop5}), } \\
  &\Rightarrow& m_0\geq2m>0, \\
  &\Rightarrow& \{\varphi(y_n^+)\}_{n\geq1} \mbox{ and } \{\varphi(-y_n^-)\}_{n\geq1} \mbox{ are bounded. }
\end{eqnarray*}

Then on account of Proposition \ref{prop6}, we see that
$$
\left\{y_n^+\right\}_{n\geq1} \subseteq W^{1,p}_0(\Omega) \mbox{ and } \left\{y_n^-\right\}_{n\geq1}\subseteq W^{1,p}_0(\Omega) \mbox{ are bounded. }
$$

So, we may assume that
$$
y_n^+\overset{w}{\longrightarrow} v_1 \mbox{ and } y_n^-\overset{w}{\longrightarrow} v_2 \mbox{ in } W^{1,p}_0(\Omega).
$$

As in the proof of Proposition \ref{prop7} we show that $v_1\not=0$, $v_2\not=0$. Invoking Proposition \ref{prop7}, we can find $t_1>0$, $t_2>0$ such that
\begin{equation}\label{eq30}
  t_1v_1\in \Ncal \mbox{ and } t_2v_2\in \Ncal.
\end{equation}

We set
\begin{equation}\label{eq31}
  y_0=t_1v_1-t_2v_2 \mbox{ with } y_0^+=t_1v_1,\;y_0^-=t_2v_2.
\end{equation}

We have
\begin{eqnarray*}
m_0&=&\lim_{n\rightarrow\infty} \varphi(y_n) \mbox{ (see \eqref{eq29}) }\\
&=&\lim_{n\rightarrow\infty}\left[\varphi(y_n^+)+\varphi(-y_n^-)\right]\\
&\geq&\liminf_{n\rightarrow\infty} \left[\varphi(t_1y_n^+)+\varphi(-t_2y_n^-)\right]\\
&&\mbox{ (see Proposition \ref{prop4} and recall that $y_n^+,-y_n^-\in \Ncal$)}\\
&\geq& \varphi(tv_1)+\varphi(-t_2v_2)\\
&&\mbox{ (from the sequential weak lower semicontinuity of $\varphi$) }\\
&=&\varphi(y_0) \mbox{ (see \eqref{eq31}) }\\
&\geq& m_0 \mbox{ (since $y_0\in \Ncal_0$, see \eqref{eq30}, \eqref{eq31}), }\\
\Rightarrow \varphi(y_0)&=&m_0,\ y_0\in \Ncal_0.
\end{eqnarray*}
The proof is now complete.
\end{proof}

We show that $\Ncal_0$ is a natural constraint for $\varphi$.
To this end, we will use some tools from Nonsmooth Analysis. In particular, we will use the generalized subdifferential in the sense of Clarke \cite{5Clarke}. Let us recall its definition. Suppose that $X$ is a Banach space and $\Psi:X\rightarrow\RR$ is a locally Lipschitz function. For every $u,h\in X$, we define
$$
\Psi^0(u;h)=\underset{t\downarrow0}{\limsup_{x\rightarrow u}}\frac{\Psi(x+th)-\Psi(x)}{t}\,.
$$

Then the mapping $h\mapsto \Psi^0(u;h)$ is continuous and sublinear. We define the set
$$
\partial\Psi(u)=\{u^*\in X^*:\langle u^*,h\rangle_X\leq\Psi^0(u;h)\mbox{ for all }h\in X\}
$$
with $\langle\cdot,\cdot\rangle_X$ being the duality brackets for the pair $(X,X^*)$. By the Hahn-Banach theorem, $\partial\Psi(u)\not=\emptyset$ for all $u\in X$ and is convex and $w^*$-compact. The multifunction $u\mapsto\partial\Psi(u)$ is the generalized subdifferential of $\Psi(\cdot)$. This notion has a very rich calculus, extending the smooth calculus and that of Convex Analysis (see Clarke \cite{5Clarke, 6Clarke}).

\begin{prop}\label{prop11}
  If hypotheses $H(a)$, $H(f)$ hold, then $y_0\in K_\varphi$ and so $y_0\in W^{1,p}_0(\Omega)\cap L^\infty(\Omega)$ is a nodal solution of problem \eqref{eq1}.
\end{prop}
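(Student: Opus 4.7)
The plan is to interpret $y_0$ as a minimizer of $\varphi$ subject to two locally Lipschitz equality constraints and then invoke Clarke's nonsmooth Lagrange multiplier rule. Define
$$\ell_{\pm}(y) := \langle \varphi'(\pm y^{\pm}),\, \pm y^{\pm} \rangle, \qquad y \in W^{1,p}_0(\Omega).$$
Since the truncations $y \mapsto y^{\pm}$ are globally Lipschitz on $W^{1,p}_0(\Omega)$ and $\varphi' \in C(W^{1,p}_0(\Omega), W^{-1,p'}(\Omega))$, both $\ell_{\pm}$ are locally Lipschitz. By construction, $\Ncal_0 = \{y \in W^{1,p}_0(\Omega):\, \ell_+(y) = \ell_-(y) = 0,\, y^{\pm} \not\equiv 0\}$, and Proposition \ref{prop10} gives $\varphi(y_0) = \inf_{\Ncal_0} \varphi$. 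The Fritz--John form of Clarke's multiplier rule (see \cite[Ch.~6]{5Clarke}) then produces a nontrivial triple $(\lambda_0, \lambda_+, \lambda_-)$ with $\lambda_0 \geq 0$ and
$$0 \in \lambda_0\,\varphi'(y_0) + \lambda_+\,\partial \ell_+(y_0) + \lambda_-\,\partial \ell_-(y_0).$$

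I would next test this inclusion against $y_0^+$ and against $-y_0^-$. Writing $\ell_+ = k \circ P_+$ with the Lipschitz map $P_+(y) = y^+$ and the $C^1$ functional $k(u) = \langle \varphi'(u), u \rangle$ (this is precisely the $k$ from Proposition \ref{prop8}), the pointwise identity $(y_0 + t y_0^+)^+ = (1+t) y_0^+$ yields the directional derivative of $P_+$ at $y_0$ in direction $y_0^+$ equal to $y_0^+$, while the analogous identity $(y_0 - t y_0^-)^+ = y_0^+$ gives directional derivative $0$ in direction $-y_0^-$. Combining with Clarke's chain rule — and exploiting that $y_0^+$ and $-y_0^-$ have disjoint supports and both vanish on $\{y_0 = 0\}$, so no level-set pathology intervenes — forces every $v^* \in \partial \ell_+(y_0)$ to satisfy
$$\langle v^*, y_0^+\rangle = \langle k'(y_0^+), y_0^+\rangle \qquad \mbox{and} \qquad \langle v^*, -y_0^-\rangle = 0,$$
with symmetric formulas for $\partial \ell_-(y_0)$. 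The same disjoint-support structure, combined with $y_0^+ \in \Ncal$, also yields $\langle \varphi'(y_0), y_0^+\rangle = \langle \varphi'(y_0^+), y_0^+\rangle = 0$, and analogously on $-y_0^-$.

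Pairing the Lagrange inclusion with $y_0^+$ therefore collapses to $\lambda_+\, \langle k'(y_0^+), y_0^+\rangle = 0$. The computation already carried out in Proposition \ref{prop8}, now applied at $y_0^+ \in \Ncal$, gives
$$\langle k'(y_0^+), y_0^+\rangle = (q-p)\int_\Omega a(z)|Dy_0^+|^q dz - \int_\Omega \bigl[f'_x(z, y_0^+)(y_0^+)^2 - (p-1) f(z, y_0^+) y_0^+\bigr] dz < 0,$$
the strict negativity supplied by $q < p$, hypothesis $H(a)$, $y_0^+ \not\equiv 0$, and $H(f)(iv)$. Hence $\lambda_+ = 0$, and symmetrically $\lambda_- = 0$. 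Nontriviality of the Fritz--John triple then forces $\lambda_0 > 0$, so $\varphi'(y_0) = 0$ in $W^{-1,p'}(\Omega)$; thus $y_0 \in K_\varphi$ and $y_0$ is a weak solution of \eqref{eq1}. Applying \cite[Theorem 7.1]{13Lad-Ura} exactly as in Proposition \ref{prop8} yields $y_0 \in W^{1,p}_0(\Omega) \cap L^\infty(\Omega)$, and $y_0$ is nodal because $y_0 \in \Ncal_0$ forces $y_0^{\pm} \not\equiv 0$.

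The main technical obstacle I anticipate is the rigorous application of Clarke's chain rule to $\ell_{\pm} = k \circ P_{\pm}$. Since $P_\pm$ is Lipschitz but not strictly differentiable when $|\{y_0 = 0\}|_N > 0$, the subdifferential $\partial \ell_{\pm}(y_0)$ is in general a convex set rather than a singleton; the crux is to show that, when paired with the very specific directions $y_0^+$ and $-y_0^-$ (both vanishing on $\{y_0 = 0\}$ and with mutually disjoint supports), the set-valued evaluation collapses to the single value $\langle k'(y_0^{\pm}), y_0^{\pm}\rangle$ on the diagonal and to $0$ off-diagonal. Once this reduction is secured, the remainder of the argument is algebraic and reuses the sign analysis of Proposition \ref{prop8}.
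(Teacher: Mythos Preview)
Your proposal matches the paper's argument: both apply Clarke's nonsmooth multiplier rule to the locally Lipschitz constraints $y\mapsto k(\pm y^{\pm})$, pair the resulting inclusion with the directions $y_0^+$ and $-y_0^-$, and finish via the strict negativity of $\langle k'(y_0^+),y_0^+\rangle$ already computed in Proposition~\ref{prop8}. On your flagged chain-rule obstacle, the paper does not argue for the exact equality $\langle v^*,y_0^+\rangle=\langle k'(y_0^+),y_0^+\rangle$ but uses only the one-sided bound $\langle v^*,y_0^+\rangle\le \ell_+^{\,0}(y_0;y_0^+)\le\langle k'(y_0^+),y_0^+\rangle<0$ coming straight from the definition of the Clarke subdifferential and the chain rule, which already suffices and is easier to justify than the two-sided statement you aim for.
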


\begin{proof}
  With $k:W^{1,p}_0(\Omega)\rightarrow\RR$ as in the proof of Proposition \ref{prop8}, we have
$$
\varphi(y_0)=m_0=\inf\left\{\varphi(y):k(y^+)=0,\; k(-y^-)=0,\;y^{\pm}\not=0 \right\}
$$
$$
\mbox{ (see Proposition \ref{prop10}).}
$$

In this case we cannot apply the classical multiplier rule since the functions $y\mapsto k_1(y)=k(y^+)$ and $y\mapsto k_2(y)=k(-y^-)$ are no longer of class $C^1$. However, they are locally Lipschitz and so instead we can use the nonsmooth multiplier rule of Clarke \cite[p.221]{6Clarke}. So, we can find $\vartheta_1,\vartheta_2\geq0$ such that
\begin{equation}\label{eq32}
  0\in \partial[\varphi+\vartheta_1 k_1+\vartheta_2 k_2](y_0)
\end{equation}
with $\partial[\varphi+\vartheta_1 k_1+\vartheta_2 k_2]$ being the generalized subdiffrerential in the sense of Clarke of the locally Lipschitz function $u\mapsto \varphi(u)+\vartheta_1 k_1(u)+\vartheta_2 k_2(u)$ defined above. From the sum rule of the subdifferential calculus (see Clarke \cite[p.200]{6Clarke}) and \eqref{eq32} we have
\begin{equation}\label{eq33}
  0=\varphi'(y_0)+\vartheta_1 h_1^*+\vartheta_2 h_2^*
\end{equation}
with $h_1^*\in \partial k_1(y_0)$ and $h_2^*\in \partial k_2(y_0)$. On \eqref{eq33} we act with $y_0$ and obtain
\begin{equation}\label{eq34}
  0=\vartheta_1\langle h_1^*,y_0^+ \rangle+\vartheta_2\langle h_2^*,-y^-_0\rangle
\end{equation}
$$
\mbox{ (note that $\varphi'(y_0)=\varphi'(y_0^+)+\varphi'(-y^-_0)$ and recall that $y^+_0,-y^-_0\in \Ncal$). }
$$

From the definition of the Clarke generalized subdifferential and the subdifferential calculus (see Clarke \cite[pp. 42,76]{5Clarke}), we have
\begin{eqnarray}\nonumber
   && \vartheta_1\langle h^*_1, y_0^+ \rangle \\ \nonumber
   &\leq& \vartheta_1\left[p\|Dy_0^+\|_p^p+q\int_\Omega a(z)|Dy_0^+|^q dz-\int_\Omega\left[f'_x(z,y_0^+)(y_0^+)^2+f(z,y_0^+)y_0^+\right]dz\right] \\ \nonumber
   &=& \vartheta_1\left(p\left[\|Dy_0^+\|_p^p+\int_\Omega a(z)|Dy_0^+|^q dz-\int_\Omega f(z,y_0^+)y_0^+ dz\right]+(q-p)\int_\Omega a(z) |Dy_0^+|dz\right) \\ \nonumber
   &-& \vartheta_1\int_\Omega \left[f'_x(z,y_0^+)(y_0^+)^2-(p-1)f(z,y_0^+)y_0^+\right] dz \\
   &\leq& 0 \mbox{ (since $y_0^+\in \Ncal$, $q<p$ and using hypothesis $H(f)(iv)$). } \label{eq35}
\end{eqnarray}

Similarly we show that
\begin{equation}\label{eq36}
  -\vartheta_2\langle h_2^*,-y^-_0\rangle\geq0.
\end{equation}

From \eqref{eq34}, \eqref{eq35} and \eqref{eq36} we infer that
$$
\vartheta_1\langle h_1^*,y_0^+\rangle=0 \mbox{ and } \vartheta_2\langle h_2^*,y_0^-\rangle=0.
$$

If $\vartheta_1\not=0$, then $\langle h_1^*, y_0^+\rangle=0$ and since $y_0^+\in \Ncal$, as in the proof of Proposition \ref{prop8}, we have a contradiction. Hence $\vartheta_1=0$. Similarly we show that $\vartheta_2=0$. Therefore we finally have
\begin{eqnarray*}
  && \varphi'(y_0)=0 \mbox{ (see \eqref{eq33}), } \\
  &\Rightarrow& y_0\in K_\varphi, \\
  &\Rightarrow& y_0\in W^{1,p}_0(\Omega)\cap L^\infty(\Omega) \mbox{ is a nodal solution \eqref{eq1}. }
\end{eqnarray*}
The proof is now complete.
\end{proof}

So, we can finally state the following multiplicity theorem for problem \eqref{eq1}.

\begin{thm}
  If hypotheses $H(a)$, $H(f)$ hold, then problem \eqref{eq1} has a ground state solution $\hat{u}\in W^{1,p}_0(\Omega)\cap L^\infty(\Omega)$ with fixed sign and a nodal solution $y_0\in W^{1,p}_0(\Omega)\cap L^\infty(\Omega)$.
\end{thm}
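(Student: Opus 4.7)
The plan is to assemble the theorem directly from the two chains of results already developed in Sections 3 and 4. There is no new analytic argument to perform here: each assertion of the theorem is obtained by invoking a minimization statement on the appropriate Nehari-type set together with the companion natural-constraint proposition that promotes the minimizer to a genuine critical point of $\varphi$ with the desired sign behavior.

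For the ground state part, I would first apply Proposition \ref{prop7} to produce $\hat{u}\in\Ncal$ with $\varphi(\hat{u})=m=\inf_{\Ncal}\varphi>0$. Then Proposition \ref{prop8} shows that the constraint $\Ncal$ is natural for this minimization (using the Lagrange multiplier rule together with hypothesis $H(f)(iv)$ to force the multiplier to vanish), so $\hat{u}\in K_\varphi$ and $\hat{u}$ solves \eqref{eq1}; the regularity $\hat{u}\in W^{1,p}_0(\Omega)\cap L^\infty(\Omega)$ comes from the Ladyzhenskaya--Uraltseva bound cited there, and the sign assertion follows from the observation that if $\hat{u}$ were nodal one would obtain $m=\varphi(\hat{u}^+)+\varphi(-\hat{u}^-)\geq 2m$, contradicting $m>0$.

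For the nodal part, I would apply Proposition \ref{prop10} to get $y_0\in\Ncal_0$ with $\varphi(y_0)=m_0=\inf_{\Ncal_0}\varphi\geq 2m$. Then Proposition \ref{prop11} acts as the natural-constraint result for $\Ncal_0$: the classical Lagrange rule is unavailable because the maps $y\mapsto k(y^+)$ and $y\mapsto k(-y^-)$ are only locally Lipschitz, so Clarke's nonsmooth multiplier rule is invoked, and the two resulting multipliers are shown to vanish using hypothesis $H(f)(iv)$ exactly as in the ground-state case. This yields $y_0\in K_\varphi$ with $y_0^\pm\not\equiv 0$, hence a nodal weak solution, again regular by the same $L^\infty$ bound.

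Since the heavy lifting (coercivity of $\varphi|_{\Ncal}$, attainment of the infima, the subdifferential/multiplier computations, and the regularity) has already been carried out in Propositions \ref{prop6}--\ref{prop8} and \ref{prop10}--\ref{prop11}, there is no real obstacle left; the only thing to be careful about is to cite the right propositions in the right order and to note explicitly that $\hat{u}$ and $y_0$ are distinct (since $\varphi(y_0)=m_0\geq 2m>m=\varphi(\hat{u})$), so that the theorem indeed provides two different solutions of \eqref{eq1}.
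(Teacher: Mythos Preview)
Your proposal is correct and matches the paper's approach exactly: the theorem is stated in the paper as an immediate summary of the preceding results, with no separate proof, so assembling Propositions~\ref{prop7}--\ref{prop8} for the constant-sign ground state and Propositions~\ref{prop10}--\ref{prop11} for the nodal solution is precisely what is intended. Your additional remark that $\varphi(y_0)=m_0\geq 2m>m=\varphi(\hat{u})$ forces $y_0\neq\hat{u}$ is a small bonus not made explicit in the paper.
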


\medskip
{\bf Acknowledgments.} This research was supported by the Slovenian Research Agency grants
P1-0292, J1-8131, N1-0064, N1-0083, and N1-0114.

\end{document}